\newcommand{\tpmod}[1]{{\@displayfalse\pmod{#1}}}
\newtheorem{thm}{Theorem}[section]
\newtheorem{lemma}[thm]{Lemma}
\newtheorem{conj}[thm]{Conjecture}
\newtheorem{prop}[thm]{Proposition}
\newtheorem{cor}[thm]{Corollary}
\theoremstyle{remark}
\theoremstyle{definition}
    \newtheorem{defn}[thm]{Definition}
\newtheorem{rem}[thm]{Remark}
\theoremstyle{THM}
\newcommand{\abs}[1]{\left|{#1}\right|}
\def\FF {{\mathcal F}}
\def\Z {{\mathbb Z}}
\def\NN {{\mathcal N}}
\def\Q {{\mathbb Q}}
\def\C {{\mathcal C}}
\def\D {{\mathcal D}}
\def\F {{\mathbb F}}
\def\D {{\mathcal D}}
\def\Z {{\mathbb Z}}
\def\Q {{\mathbb Q}}
\def\C {{\mathbb C}}
\def\red#1 {\textcolor{red}{#1 }}
\def\blue#1 {\textcolor{blue}{#1 }}
\numberwithin{equation}{section}
\def\Z {{\mathbb Z}}
\begin{document}

\title[Power Compositional Trinomials]{The Irreducibility and Monogenicity of Power Compositional Trinomials}

\author{Joshua Harrington}
\address{Department of Mathematics, Cedar Crest College, Allentown, Pennsylvania, USA}
\email[Joshua Harrington]{Joshua.Harrington@cedarcrest.edu}

\author{Lenny Jones}
\address{Professor Emeritus, Department of Mathematics, Shippensburg University, Shippensburg, Pennsylvania 17257, USA}
\email[Lenny~Jones]{lkjone@ship.edu}

\date{\today}

\begin{abstract}
A polynomial $f(x)\in \Z[x]$ of degree $N$ is called \emph{monogenic} if $f(x)$ is irreducible over $\Q$ and $\{1,\theta,\theta^2,\ldots ,\theta^{N-1}\}$ is a basis for the ring of integers of $\Q(\theta)$, where $f(\theta)=0$. Define $\FF(x):=x^m+Ax^{m-1}+B$.
In this article, we determine sets of conditions on $m$, $A$, and $B$, such that the power compositional trinomial $\FF(x^{p^n})$ is monogenic for all integers $n\ge 0$ and a given prime $p$. Furthermore, we prove the actual existence of infinite families of such trinomials $\FF(x)$.
\end{abstract}

\subjclass[2020]{Primary 11R04, Secondary 11R09, 12F05}
\keywords{irreducible, monogenic, power compositional, trinomial}

\maketitle
\section{Introduction}\label{Section:Intro}

 Unless stated otherwise, when we say that $f(x)$ is ``irreducible", we mean irreducible over $\Q$. We let $\Delta(f(x))$, or simply $\Delta(f)$, and $\Delta(K)$ denote the discriminants over $\Q$, respectively, of $f(x)$ and a number field $K$.
If $f(x)$ is irreducible, with $f(\theta)=0$ and $K=\Q(\theta)$, then \cite{Cohen}
\begin{equation} \label{Eq:Dis-Dis}
\Delta(f)=\left[\Z_K:\Z[\theta]\right]^2\Delta(K),
\end{equation}
where $\Z_K$ is the ring of integers of $K$. We define $f(x)$ to be \emph{monogenic} if $f(x)$ is irreducible and
$\Z_K=\Z[\theta]$, or equivalently from \eqref{Eq:Dis-Dis}, that  $\Delta(f)=\Delta(K)$. When $f(x)$ is monogenic, $\{1,\theta,\theta^2,\ldots ,\theta^{\deg(f)-1}\}$ is a basis for $\Z_K$, commonly referred to as a \emph{power basis}. The existence of a power basis facilitates computations in $\Z_K$, as in the case of the cyclotomic polynomials $\Phi_n(x)$ \cite{Washington}.
We see from \eqref{Eq:Dis-Dis} that if $\Delta(f)$ is squarefree, then $f(x)$ is monogenic. However, the converse is false in general, and when $\Delta(f)$ is not squarefree, it can be quite difficult to determine whether $f(x)$ is monogenic.

In this article, our focus is on trinomials of the form
\begin{equation}\label{F}
\begin{array}{c}
  \FF(x):=\FF_{m,A,B}(x)=x^{m}+Ax^{m-1}+B\in \Z[x].
  \end{array}
\end{equation} We determine sets of conditions on $m$, $A$ and $B$ so that the power compositional trinomial $\FF(x^{p^n})$ is monogenic (and hence irreducible)  for all integers $n\ge 0$ and a given prime $p$. We then prove the actual existence of infinite families of such trinomials.
      Although much research has been conducted concerning the irreducibility and monogenicity of trinomials \cite{BMT,DS,Josh,JonesIJM,JonesACM,JonesJAA,JonesNYJM,JP, KR,PR,SSS,Shparlinski,Smith}, the approaches presented in this article are novel, and they allow us to construct new infinite families of monogenic trinomials in a manner unlike any methods previously used. Our main results are as follows.

 \begin{thm}\label{Thm:Main1}
 Let $\FF(x)$ be as defined in \eqref{F}, 
 and let $p\ge 3$ be a prime. Define
  \begin{equation}\label{Eq:D}
  \D:=m^mB-(-1)^m(m-1)^{m-1}A^m.
  \end{equation}
  \begin{enumerate}
     \item \label{I1:m=2} When $m=2$, let $A=4pu+p^2+2$ and $B=2pt+1$, 
         where $u,t\in \Z$ are such that $B$, $B-1$ and $\D$ are squarefree, with $\abs{B}\ge 2$.
    \item \label{I2:m=3 and m=4} When $m\in \{3,4\}$, let $A=4p^2u+1$ and $B=2pt+p$, where $u,t\in \Z$ are such that $B$ and $\D$ are squarefree, with $\abs{B}\ge 2$.
    \item \label{I3:m>=5} When $m\ge 5$, let $A=4p^2u+1$ and $B=2pt+p$, where $u,t\in \Z$ are such that $B$ and $\D$ are squarefree, with $\abs{B}\ge 2$, and assume that $\FF(x)$ is irreducible.
  \end{enumerate}
  Then, in any case above, $\FF(x^{p^n})$ is monogenic for all $n\ge 0$.
  \end{thm}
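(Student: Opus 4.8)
The plan is to fix $n\ge 0$, set $G_n(x):=\FF(x^{p^n})$, let $\alpha$ be a root, $L:=\Q(\alpha)$, and $\theta:=\alpha^{p^n}$, so that $K:=\Q(\theta)$ is the field attached to $\FF$ with $K\subseteq L$ and, once irreducibility is known, $[L:K]=p^n$. By the equivalence recorded after \eqref{Eq:Dis-Dis}, it is enough to show that $G_n$ is irreducible and that no rational prime $q$ divides the index $[\Z_L:\Z[\alpha]]$. Since any such $q$ must divide $\Delta(G_n)$, only finitely many primes need to be inspected, and I would dispatch each one with Dedekind's criterion.

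For irreducibility I would first treat $\FF$ itself and then pass to $G_n$ through the Capelli criterion, which (as $p$ is odd) says that $\FF(x^{p^n})$ is irreducible precisely when $\FF$ is irreducible and $\theta\notin K^p$. When $m\ge 3$ the congruences give $p\parallel B=(-1)^mN_{K/\Q}(\theta)$, so $v_p(N_{K/\Q}(\theta))=1$ is coprime to $p$ and hence $\theta\notin K^p$; moreover the $p$-adic Newton polygon of $\FF$ consists of a segment from $(0,1)$ to $(m-1,0)$ of slope $-1/(m-1)$, giving a totally ramified $\Q_p$-factor of degree $m-1$, together with a horizontal length-one segment giving an unramified degree-one factor, so any nontrivial factorization of $\FF$ over $\Q$ would have to split off a rational (integer, dividing $B$) linear factor. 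For $m\in\{3,4\}$ the congruences exclude such a root elementarily, giving irreducibility of $\FF$; for $m\ge 5$ the candidate integer roots are too numerous to rule out in general, which is why irreducibility is hypothesized. For $m=2$ one has $\FF\equiv(x+1)^2\pmod p$ and $p\nmid B$; here $\theta\theta'=B$ with $B$ squarefree forces any $q\parallel B$ to split in $K$, whence the two $\mathfrak q$-adic valuations of $\theta$ sum to $1$, again giving $\theta\notin K^p$, while $A^2-4B\equiv 5\pmod 8$ shows $\FF$ is itself irreducible.

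Next I would record the discriminant. A resultant computation gives $\Delta(\FF)=(-1)^{m(m-1)/2}B^{m-2}\D$, and the standard formula for $f(x^p)$ yields the recursion $\Delta(G_n)=\pm\,p^{\,mp^n}B^{\,p-1}\,\Delta(G_{n-1})^{p}$; consequently every prime dividing $\Delta(G_n)$ lies in $\{p\}\cup\{q:q\mid B\}\cup\{q:q\mid\D\}$, and these three families are exactly where the squarefreeness hypotheses are spent. For an odd prime $q\mid\D$ with $q\ne p$, the reduction $\overline{\FF}$ has a single double root, which lifts through $x\mapsto x^{p^n}$ to a controlled repeated factor of $\overline{G_n}$, and $q\parallel\D$ makes Dedekind's criterion hold. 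For $q\mid B$ with $q\ne p$, the factor $x^{(m-1)p^n}$ of $\overline{G_n}$ (or $x^{p^n}$ when $m=2$) is handled by its $q$-adic Newton polygon, a single segment whose length is coprime to its denominator because $v_q(B)=1$; so $q$ does not divide the index.

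The decisive prime is $q=p$, which I expect to be the main obstacle. There $\overline{\FF}\equiv x^{m-1}(x+1)$ for $m\ge 3$ (respectively $(x+1)^2$ for $m=2$), so $\overline{G_n}$ is a product of the highly inseparable factors $x^{(m-1)p^n}$ and $(x+1)^{p^n}$. The $x$-branch is again a single Newton-polygon segment because $p\parallel B$. The $(x+1)$-branch is the heart of the matter: it corresponds to the unramified degree-one prime of $K$ at which $\theta\equiv-1$, over which $L$ is a Kummer-type extension defined by $x^{p^n}=\theta$, so verifying Dedekind there amounts to controlling the ramification of $x^{p^n}-\theta$ and requires knowing $\theta$ to second $p$-adic order. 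This is exactly why $A$ is prescribed modulo $p^2$ when $m\ge 3$: it yields $v_p(\FF(-1))=1$ and $\FF'(-1)\in\Z_p^\times$, which together with $\D$ squarefree close the argument. When $m=2$ the double root at $-1$ instead makes this branch totally ramified of degree $2p^n$, again governed by a single Newton-polygon segment, and it is here that the auxiliary hypothesis that $B-1$ is squarefree enters the verification. Thus the bulk of the work, and the place where the precise congruences on $A$ and $B$ are indispensable, is the local Kummer analysis at $p$; everywhere else the repeated factors of $\overline{G_n}$ are either totally ramified or of simple-root type and fall to routine Newton-polygon or Dedekind computations.
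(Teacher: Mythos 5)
Your proposal is correct in its main thrust, but it takes a genuinely different route from the paper. The paper proves the case $n=0$ with the Jakhar--Khanduja--Sangwan trinomial index criterion (its Theorem 2.7) and then \emph{inducts} on $n$: once $\FF(x^{p^n})$ is known monogenic, the tower divisibility $\Delta(K_n)^{[K_{n+1}:K_n]}\mid\Delta(K_{n+1})$ together with $\Delta(\FF(x^{p^{n+1}}))/\Delta(\FF(x^{p^n}))^p=\pm B^{p-1}p^{mp^{n+1}}$ shows that only primes dividing $pB$ can divide the new index; these are then dispatched by an explicit Dedekind computation at $p$ (resting on the congruence $\binom{2p^{n+1}}{p^{n+1}}\equiv 2\pmod{p^2}$) and by the trinomial criterion at $q\mid B$. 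You instead fix $n$ and analyze every prime dividing $\Delta(\FF(x^{p^n}))$ locally, via Capelli plus Newton polygons and Eisenstein shifts. Your route buys real things: it needs no induction and no binomial-coefficient congruences (the observation that $(y-1)^{p^n}-\theta_0$ is literally Eisenstein once $v_p(\theta_0+1)=1$ replaces them), and your $p$-adic factorization of $\FF$ into an irreducible of degree $m-1$ and a linear factor actually proves more than you claim: combined with the fact that $A$ and $B$ are both odd, so $r^m+Ar^{m-1}+B$ is odd for \emph{every} integer $r$, it shows $\FF$ is irreducible for all $m\ge 3$, making the irreducibility hypothesis in part (3) redundant. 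Your worry that integer roots are ``too numerous to rule out'' for $m\ge 5$ is unfounded. What the paper's route buys, in exchange, is that it never has to verify Dedekind at primes dividing $\D$ for $n\ge 1$ --- which is exactly the step you gloss over.

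That is the first soft spot: for $q\mid\D$, $q\ne p$, $n\ge 1$, your one-line claim that ``$q\parallel\D$ makes Dedekind's criterion hold'' is the least routine part of your plan. Note that $v_q\left(\Delta(\FF(x^{p^n}))\right)=p^n\ge 3$, so the cheap argument ``index squared divides the discriminant'' is unavailable, and the repeated part of $\overline{\FF(x^{p^n})}$ is $(x^{p^n}-r)^2$, a square of a product of possibly many irreducibles, each of which must be checked against $\overline{F}$. This is fillable --- the cleanest repair inside your framework is to apply the trinomial criterion to $\FF(x^{p^n})$ itself, where (after ruling out $q\mid A$, $q\mid B$, and $q\mid m(m-1)$ using squarefreeness of $\D$) condition (5) reduces exactly to $q^2\nmid\D$ --- but as written it is an assertion, not an argument.

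The second soft spot is an actual misattribution. For $m=2$ at $q=p$, your ``single Newton-polygon segment'' claim requires $v_p(\FF(-1))=1$, and you credit this to $B-1$ being squarefree. That is not where it comes from: $\FF(-1)=1-A+B=p(2t-4u-p)$, and squarefreeness of $B-1=2pt$ only gives $p\nmid t$, which does not prevent $t\equiv 2u\pmod{p}$. The correct source is squarefreeness of $\D$: from $4\FF(-1)=(A-2)^2+\D$, with $v_p\left((A-2)^2\right)\ge 2$ and $v_p(\D)\ge 1$, one gets $v_p(\FF(-1))=v_p(\D)$, which equals $1$ precisely because $\D$ is squarefree. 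Since that hypothesis is available, your argument survives once the attribution is corrected. (The same subtlety sits in the paper's own proof of part (1): its $F(-1)$ contains the term $-\frac{1}{p}\left(\binom{2p^{n+1}}{p^{n+1}}-A\right)\equiv 4u\pmod{p}$, which is silently dropped; in fact $F(-1)\equiv 4u-2t\pmod{p}$, and its nonvanishing is equivalent to $p^2\nmid\D$, not to $B-1$ being squarefree.)
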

\begin{cor}\label{Cor:Main1}
 Let $\FF(x)$ be as defined in \eqref{F}, 
  let $p\ge 3$ be a prime, and let $u\in \Z$. Let $\D$ be as defined in \eqref{Eq:D}. 
 Then, in any case of Theorem \ref{Thm:Main1}, there exist infinitely many prime values of $t$ such that $\FF(x^{p^n})$ is monogenic for all $n\ge 0$.
\end{cor}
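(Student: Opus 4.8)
The plan is to fix $u$, so that $A$ becomes a fixed integer, and then to regard $B$ and $\D$ (and, when $m=2$, also $B-1$) as affine-linear polynomials in the single variable $t$. Explicitly, $B(t)=2pt+p$ (or $B(t)=2pt+1$ when $m=2$) has leading coefficient $2p$, while $\D(t)=m^mB(t)-(-1)^m(m-1)^{m-1}A^m$ has leading coefficient $2m^mp$. By Theorem \ref{Thm:Main1}, once $\abs{B}\ge 2$ (which holds for all large $t$) it suffices to produce infinitely many \emph{prime} values of $t$ for which the finitely many squarefreeness hypotheses of the relevant case hold simultaneously. Thus the corollary reduces to a statement about simultaneous squarefree values of linear forms evaluated at prime arguments.

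First I would dispose of the cheap conditions. When $m=2$ one has $B-1=2pt$, which for every prime $t\notin\{2,p\}$ is a product of three distinct primes and hence automatically squarefree; this removes the $B-1$ hypothesis at the cost of excluding finitely many primes. Consequently, in every case it remains only to find infinitely many primes $t$ at which both $B(t)$ and $\D(t)$ are squarefree. In the cases $m\ge 3$ one has $p\mid B(t)$, so squarefreeness of $B(t)$ merely forces $t$ to avoid a single residue class modulo $p$, which is harmless.

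The core is a squarefree sieve for the two linear forms $B(t)$ and $\D(t)$ at primes. I would first rule out any local obstruction. Since $A\neq 0$ (indeed $4p^2u+1\neq 0$ and $4pu+p^2+2$ is odd, hence nonzero), the forms $B(t)$ and $\D(t)$ are not proportional, so their resultant is nonzero; and neither form possesses a fixed square divisor, because $\gcd$ of its coefficients is squarefree. One must then check, for each prime $q$, the existence of a residue class $r_q\pmod{q^2}$ with $\gcd(r_q,q)=1$ satisfying $q^2\nmid B(r_q)$ and $q^2\nmid\D(r_q)$. For the finitely many small primes dividing the coefficients or the resultant (in particular $q\in\{2,p\}$ and the primes dividing $m$) this is verified by direct computation modulo $q^2$, using the chosen congruences such as $A\equiv 1\pmod 4$; for all larger $q$ each form is nonconstant modulo $q^2$ and the excluded classes number far fewer than $\phi(q^2)$. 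Assembling these good classes by the Chinese Remainder Theorem over all $q\le y$ yields a progression $t\equiv r\pmod M$ with $\gcd(r,M)=1$ on which $B$ and $\D$ are free of square factors from primes $q\le y$, and Dirichlet's theorem supplies infinitely many primes in this progression.

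The main obstacle, and the technical heart of the argument, is controlling the contribution of the \emph{large} prime moduli $q>y$. For each such $q$ the bad primes $t\le X$ satisfy $q^2\mid B(t)$ or $q^2\mid\D(t)$, that is, $t$ lies in one of at most two residue classes modulo $q^2$; bounding the number of primes $t\le X$ in a progression of modulus $q^2$ by the Brun--Titchmarsh inequality and summing over $y<q\le\sqrt{X}$ (the range $q>\sqrt{X}$ being handled trivially) shows that the total number of such bad primes is $o\!\left(\pi(X)\right)$ once $y$ is taken sufficiently large. Combined with the preceding step, this produces $\gg X/\log X$ primes $t\le X$ with $B(t)$ and $\D(t)$ both squarefree, and hence infinitely many. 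Feeding these values of $t$ back into Theorem \ref{Thm:Main1} yields the asserted infinite family of monogenic power compositional trinomials $\FF(x^{p^n})$.
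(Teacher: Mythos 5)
Your route is genuinely different from the paper's. The paper sets $G(t):=B\D$, observes that $G$ is (up to constants) a product of two linear forms in $t$, so that by the counting discussion following Corollary \ref{Cor:Squarefree} only $\ell=2$ could carry a local obstruction, verifies $G(1)\not\equiv 0\pmod{4}$ in each case, and then simply quotes Corollary \ref{Cor:Squarefree} (i.e.\ the Helfgott--Hooley--Pasten theorem, Theorem \ref{Thm:Pasten}) to obtain infinitely many primes $t$ with $G(t)$ squarefree. You instead try to prove the required ``squarefree values at prime arguments'' statement from scratch by an elementary sieve, which is in principle feasible precisely because your forms are linear, so every bad modulus satisfies $q\ll\sqrt{X}$. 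Your preliminary reductions (disposing of $B-1=2pt$ when $m=2$, non-proportionality of $B$ and $\D$ since $A\neq 0$, the shape of the local conditions) are sound, although you assert rather than perform the small-prime verifications that constitute the actual content of the paper's proof (its computation $G(1)\equiv 1\pmod{4}$, etc.). One caveat, shared with the paper: for $m\ge 3$ your claim that $\D(t)$ has no fixed square divisor can fail when $\gcd(m,A)>1$ (e.g.\ $m=3$ and $3\mid A$ force $27\mid\D(t)$ for every $t$), in which case the hypotheses of Theorem \ref{Thm:Main1} are unsatisfiable; both arguments implicitly exclude this.

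The genuine gap is in your final assembly, and it is quantitative, not cosmetic. You count primes $t\le X$ in a single progression $t\equiv r\pmod{M}$ with $M=\prod_{q\le y}q^2$; there are $\sim\pi(X)/\phi(M)$ of them. Against this you subtract the number of primes $t\le X$ \emph{in all residue classes} that are bad at some $q>y$. Brun--Titchmarsh summed over $y<q\le X^{1/2-\epsilon}$ bounds that quantity by $C\,\pi(X)/y$ for fixed $y$; it is not $o(\pi(X))$, and indeed the bad primes genuinely have relative density roughly $\sum_{q>y}2/(q(q-1))\approx 2/y$ among all primes. For the subtraction to leave anything positive you would need $\phi(M)\le y/C$, whereas $\phi(M)=\prod_{q\le y}q(q-1)$ grows superexponentially in $y$; this mismatch persists no matter how $y=y(X)$ is chosen, so your two estimates are consistent with every prime of your progression being bad. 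The standard repair is to bound the bad primes \emph{inside} the progression: for $y<q\le X^{1/2-\epsilon}$ apply Brun--Titchmarsh to the CRT modulus $Mq^2$, giving $\ll X/\bigl(\phi(M)q(q-1)\log X\bigr)$ per $q$ and $\ll X/\bigl(\phi(M)\,y\log X\bigr)$ after summation, which beats the main term once the constant $y$ is large enough; then handle $X^{1/2-\epsilon}<q\ll\sqrt{X}$ by the trivial bound $X/q^2+1$ (note that Brun--Titchmarsh itself degenerates when $q^2$ is near $X$, so your use of it all the way up to $\sqrt{X}$ needs this split in any case), and note that no bad $q$ exceeds a constant times $\sqrt{X}$ because the forms are linear. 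Equivalently, one can run the full inclusion--exclusion of the squarefree sieve. With those corrections your argument closes and yields a self-contained proof avoiding Theorem \ref{Thm:Pasten}; as written, it does not establish the existence of even one good prime.
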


\section{Preliminaries}\label{Section:Prelim}

The formula for the discriminant of an arbitrary monic trinomial, due to Swan \cite{Swan}, is given in the following theorem.
\begin{thm}
\label{Thm:Swan}
Let $f(x)=x^n+Ax^m+B\in \Q[x]$, where $0<m<n$, and let $d=\gcd(n,m)$. Then
\[
\Delta(f)=(-1)^{n(n-1)/2}B^{m-1}\left(n^{n/d}B^{(n-m)/d}-(-1)^{n/d}(n-m)^{(n-m)/d}m^{m/d}A^{n/d}\right)^d.
\]
\end{thm}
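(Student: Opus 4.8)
The plan is to compute the discriminant through its expression as a resultant. Since $f$ is monic of degree $n$, we have $\Delta(f) = (-1)^{n(n-1)/2}\operatorname{Res}(f,f')$, so the whole problem reduces to evaluating $\operatorname{Res}(f,f')$. The first move is to factor the derivative as $f'(x) = nx^{n-1} + Amx^{m-1} = x^{m-1}\bigl(nx^{n-m}+Am\bigr)$, and then to exploit the multiplicativity of the resultant in its second argument to write $\operatorname{Res}(f,f') = \operatorname{Res}(f,x^{m-1})\cdot\operatorname{Res}\bigl(f,\,nx^{n-m}+Am\bigr)$.

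The first factor is immediate: using that the resultant of $f$ against a polynomial equals, up to a sign and a power of the leading coefficient, the product of the values of $f$ at that polynomial's roots, evaluating at the single root $0$ of $x^{m-1}$ gives $\operatorname{Res}(f,x^{m-1}) = (-1)^{n(m-1)}f(0)^{m-1} = (-1)^{n(m-1)}B^{m-1}$.

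The heart of the argument, and the step I expect to be the main obstacle, is the second factor. Writing $g(x) = nx^{n-m}+Am$, I would use $\operatorname{Res}(f,g) = (-1)^{n(n-m)}\,n^n\prod_{g(\beta)=0}f(\beta)$. For each root $\beta$ of $g$ one has $\beta^{n-m} = -Am/n$, which collapses $f(\beta)=\beta^n+A\beta^m+B$ to the affine expression $f(\beta) = \tfrac{A(n-m)}{n}\beta^m + B$ in $\beta^m$. The delicate point is then to evaluate $\prod_\beta\bigl(\tfrac{A(n-m)}{n}\beta^m + B\bigr)$: as $\beta$ runs over the $n-m$ roots of $g$, the quantity $\beta^m$ runs over the $\tfrac{n-m}{d}$-th roots of a fixed constant, each value attained exactly $d=\gcd(n,m)=\gcd(n-m,m)$ times. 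This is precisely where the greatest common divisor and the $d$-th power in the statement are born. Grouping accordingly and applying the identity $\prod_{k}(T-\omega^k) = T^{e}-1$ over the $e$-th roots of unity (with $e=(n-m)/d$), the product becomes a perfect $d$-th power; after substituting $\beta^{n-m}=-Am/n$ and clearing the powers of $n$, this yields $\operatorname{Res}(f,g) = (-1)^{n(n-m)}\bigl(n^{n/d}B^{(n-m)/d}-(-1)^{n/d}(n-m)^{(n-m)/d}m^{m/d}A^{n/d}\bigr)^d$.

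Finally I would assemble the pieces. Multiplying the two factors produces an overall sign $(-1)^{n(m-1)+n(n-m)} = (-1)^{n(n-1)}$, which equals $+1$ since $n(n-1)$ is even; hence the sign contributions from the two resultants cancel cleanly, and only the factor $(-1)^{n(n-1)/2}$ from the resultant-to-discriminant conversion survives. This produces exactly the claimed formula. The only real care required throughout is the bookkeeping of signs and of the fractional exponents $n/d$, $(n-m)/d$, $m/d$, together with the verification that each is a genuine integer, so that the root-of-unity grouping in the crucial step is legitimate.
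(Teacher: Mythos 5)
Your proof is correct, but there is nothing in the paper to compare it against: the paper states this theorem as a known result, citing Swan's 1962 paper, and gives no proof of its own. Judged on its own merits, your resultant computation goes through. The decomposition $\operatorname{Res}(f,f')=\operatorname{Res}(f,x^{m-1})\operatorname{Res}(f,nx^{n-m}+Am)$ is legitimate by multiplicativity; the evaluation $\operatorname{Res}(f,x^{m-1})=(-1)^{n(m-1)}B^{m-1}$ is right; and the key counting step is sound, since the fibers of $k\mapsto km \bmod (n-m)$ each have size $\gcd(m,n-m)=\gcd(m,n)=d$, so $\beta^m$ runs over the $(n-m)/d$-th roots of $(r^m)^{(n-m)/d}$ with multiplicity exactly $d$, where $r$ is any fixed root of $nx^{n-m}+Am$. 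Carrying out the substitution $(r^m)^{(n-m)/d}=(r^{n-m})^{m/d}=(-Am/n)^{m/d}$ (integrality of $m/d$, $(n-m)/d$, $n/d$ being automatic from $d=\gcd(m,n)$) and absorbing $n^n=(n^{n/d})^d$ produces precisely the claimed $d$-th power, and the residual sign $(-1)^{n(m-1)+n(n-m)}=(-1)^{n(n-1)}=+1$ leaves only the factor $(-1)^{n(n-1)/2}$ from the discriminant--resultant conversion. This is essentially the classical derivation, so your argument can stand as a self-contained proof of the quoted theorem.
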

The following immediate corollary of Theorem \ref{Thm:Swan} is of pertinence to us here.
\begin{cor}\label{Cor:Swan}
  Let $\FF(x)$ be as defined in \eqref{F}, and let $p$ be a prime. Then, for any integer $n\ge 0$, we have
  \begin{equation}\label{Eq:Disc(F(x^{p^n})}
  \Delta(\FF(x^{p^n}))=(-1)^{mp^n(mp^n-1)/2}B^{(m-1)p^n-1}p^{mnp^n}\D^{p^n},
  \end{equation}
  where $\D$ is as defined in \eqref{Eq:D}.
 \end{cor}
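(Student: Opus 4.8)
The plan is to apply Swan's discriminant formula (Theorem~\ref{Thm:Swan}) directly, after rewriting $\FF(x^{p^n})$ as a trinomial and tracking the parameters carefully through the substitution. The one genuine point of care is the notational clash: Theorem~\ref{Thm:Swan} uses $n$ for the degree and $m$ for the middle exponent, whereas in \eqref{F} the symbol $m$ denotes the degree of $\FF$ while $n$ is the exponent appearing in $p^n$.

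First I would observe that
\[
\FF(x^{p^n})=x^{mp^n}+Ax^{(m-1)p^n}+B,
\]
so that, in the notation of Theorem~\ref{Thm:Swan}, the degree is $N:=mp^n$ and the middle exponent is $M:=(m-1)p^n$, with the same constants $A$ and $B$. The crucial arithmetic input is the relevant gcd: since consecutive integers are coprime, $\gcd(m,m-1)=1$, and hence
\[
d:=\gcd(N,M)=\gcd\bigl(mp^n,(m-1)p^n\bigr)=p^n.
\]
This yields the clean exponents $N/d=m$, $M/d=m-1$, $N-M=p^n$, and $(N-M)/d=1$, which are exactly what make the formula collapse.

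Next I would substitute these values into Swan's formula. The sign and the leading power of $B$ become $(-1)^{mp^n(mp^n-1)/2}$ and $B^{(m-1)p^n-1}$ immediately, matching the corresponding factors in \eqref{Eq:Disc(F(x^{p^n})}. The main step is to simplify the base of the $d$-th power: one computes
\[
(mp^n)^m B-(-1)^m p^n\bigl((m-1)p^n\bigr)^{m-1}A^m
=p^{nm}\Bigl(m^mB-(-1)^m(m-1)^{m-1}A^m\Bigr)=p^{nm}\D,
\]
where $\D$ is as in \eqref{Eq:D}; the key observation is that both terms carry a common factor of $p^{nm}$, which can be pulled out. Raising this to the power $d=p^n$ gives $p^{nmp^n}\D^{p^n}$, and assembling the pieces produces exactly \eqref{Eq:Disc(F(x^{p^n})}. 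There is no real obstacle beyond this bookkeeping: the entire content is the gcd computation and the factoring of $p^{nm}$ out of the inner difference.
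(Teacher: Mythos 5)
Your proposal is correct and is exactly the argument the paper intends: the paper states this as an immediate corollary of Theorem~\ref{Thm:Swan}, and your substitution $N=mp^n$, $M=(m-1)p^n$, $d=\gcd(N,M)=p^n$ (from $\gcd(m,m-1)=1$), followed by factoring $p^{nm}$ out of the inner difference, is precisely the bookkeeping that justifies it. All exponents and the sign check out, so there is nothing to add.
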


The next two theorems are due to Capelli \cite{S}.
 \begin{thm}\label{Thm:Capelli1}  Let $f(x)$ and $h(x)$ be polynomials in $\Q[x]$ with $f(x)$ irreducible. Suppose that $f(\alpha)=0$. Then $f(h(x))$ is reducible over $\Q$ if and only if $h(x)-\alpha$ is reducible over $\Q(\alpha)$.
 \end{thm}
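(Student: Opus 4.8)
The plan is to reduce the reducibility statement to a single comparison of field degrees at one well-chosen root, using the tower law as the only real ingredient. Set $d=\deg f=[\Q(\alpha):\Q]$, which holds because $f$ is irreducible with $f(\alpha)=0$, and let $e=\deg h$; then (comparing leading terms, and working up to nonzero scalars) $\deg(f(h(x)))=de$ while $\deg(h(x)-\alpha)=e$. First I would choose $\beta\in\overline{\Q}$ with $h(\beta)=\alpha$. Such a $\beta$ exists because $h(x)-\alpha$ is nonconstant, and it is automatically a root of $f(h(x))$ since $f(h(\beta))=f(\alpha)=0$. The key structural observation is that $K:=\Q(\alpha)=\Q(h(\beta))\subseteq \Q(\beta)$, which is exactly what lets me factor the extension $\Q(\beta)/\Q$ through $K$.

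Next I would attach to $\beta$ its two minimal polynomials. Let $g(x)\in K[x]$ be the minimal polynomial of $\beta$ over $K$; since $\beta$ is a root of $h(x)-\alpha\in K[x]$, the polynomial $g$ is precisely the irreducible factor of $h(x)-\alpha$ over $K$ that vanishes at $\beta$, and $\deg g=[\Q(\beta):K]$. Let $F(x)\in\Q[x]$ be the minimal polynomial of $\beta$ over $\Q$; likewise $F$ is the irreducible factor of $f(h(x))$ over $\Q$ vanishing at $\beta$, with $\deg F=[\Q(\beta):\Q]$. The tower law then gives the crux identity $\deg F=[\Q(\beta):K]\,[K:\Q]=d\cdot\deg g$.

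From here the equivalence falls out by comparing these degrees to the respective total degrees. I would argue that $f(h(x))$ is irreducible over $\Q$ if and only if $\deg F=de$, since an irreducible factor of a polynomial coincides with the whole polynomial (up to a scalar) exactly when their degrees agree; symmetrically, $h(x)-\alpha$ is irreducible over $K$ if and only if $\deg g=e$. Substituting $\deg F=d\deg g$ and cancelling $d$ yields $\deg F=de \iff \deg g=e$, hence $f(h(x))$ is irreducible over $\Q$ precisely when $h(x)-\alpha$ is irreducible over $K$. Negating both sides gives the stated reducibility equivalence.

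The main point requiring care, though it is not genuinely difficult, is the justification that the root $\beta$ may be chosen with $h(\beta)=\alpha$ rather than with $h(\beta)$ equal to some other conjugate of $\alpha$, together with the verification that $F$ and $g$ really are the full minimal polynomials so that the ``degree equals total degree'' criterion for irreducibility applies. If one wanted the sharper statement that the number of irreducible factors of $f(h(x))$ over $\Q$ equals that of $h(x)-\alpha$ over $K$, I would upgrade the above into an explicit bijection $g\mapsto F$ on irreducible factors: injectivity follows by exhibiting an element of $\Gal(\overline{\Q}/K)$ carrying one root to another, and surjectivity by applying a Galois automorphism that sends an arbitrary root of $f(h(x))$ to one whose $h$-value is exactly $\alpha$. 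For the equivalence as stated, however, the single-root degree comparison already suffices.
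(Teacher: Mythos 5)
Your proof is correct, but note that the paper contains no proof of this statement to compare against: Theorem \ref{Thm:Capelli1} is quoted as a classical result of Capelli, with a citation to Schinzel's monograph \cite{S}. Judged on its own, your argument is the standard proof of this equivalence, and it is complete: choosing $\beta\in\overline{\Q}$ with $h(\beta)=\alpha$, observing $\Q(\alpha)\subseteq\Q(\beta)$, and using the tower law to get $[\Q(\beta):\Q]=[\Q(\beta):\Q(\alpha)]\cdot\deg f$, the equivalence follows because the minimal polynomial of $\beta$ over $\Q$ is an irreducible factor of $f(h(x))$, the minimal polynomial of $\beta$ over $\Q(\alpha)$ is an irreducible factor of $h(x)-\alpha$, and in each case irreducibility of the ambient polynomial is equivalent to that factor having full degree. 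Two small remarks. First, you implicitly assume $h$ is nonconstant (needed both for $\beta$ to exist and for the statement to be non-degenerate); this is harmless but worth stating. Second, the concern in your closing paragraph about $h(\beta)$ possibly equaling a conjugate of $\alpha$ rather than $\alpha$ itself is vacuous: you define $\beta$ as a root of the polynomial $h(x)-\alpha$, so $h(\beta)=\alpha$ holds by construction, and no Galois-theoretic adjustment is ever needed for the equivalence as stated (it would only matter for the finer claim, which you correctly flag as optional, that the irreducible factors of $f(h(x))$ over $\Q$ biject with those of $h(x)-\alpha$ over $\Q(\alpha)$).
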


\begin{thm}\label{Thm:Capelli2}  Let $c\in \Z$ with $c\geq 2$, and let $\alpha\in\C$ be algebraic.  Then $x^c-\alpha$ is reducible over $\Q(\alpha)$ if and only if either there is a prime $p$ dividing $c$ such that $\alpha=\beta^p$ for some $\beta\in\Q(\alpha)$ or $4\mid c$ and $\alpha=-4\beta^4$ for some $\beta\in\Q(\alpha)$.
\end{thm}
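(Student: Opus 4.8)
The statement is the classical Vahlen--Capelli criterion, and I will prove it in the equivalent contrapositive form: writing $K=\Q(\alpha)$, I claim that $x^c-\alpha$ is \emph{irreducible} over $K$ exactly when $\alpha\notin K^q$ for every prime $q\mid c$ and, additionally, $\alpha\notin -4K^4$ whenever $4\mid c$. The asserted equivalence is the logical negation of this, so it suffices to establish both implications of the contrapositive. The degenerate case $\alpha=0$ is immediate, since then $x^c$ is reducible while $\alpha=0^q$ exhibits the required power, so I assume $\alpha\neq 0$ from now on; since $\operatorname{char}K=0$, the polynomial $x^c-\alpha$ is separable, which I will use freely.

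For the implication that the displayed condition forces reducibility, I would simply produce explicit factorizations. If $\alpha=\beta^q$ with $\beta\in K$ and $q\mid c$ a prime, then setting $y=x^{c/q}$ in the identity $y^q-\beta^q=(y-\beta)\bigl(y^{q-1}+\beta y^{q-2}+\cdots+\beta^{q-1}\bigr)$ splits $x^c-\alpha$ into two factors of positive degree. If instead $4\mid c$ and $\alpha=-4\beta^4$, the Sophie Germain identity applied to $y=x^{c/4}$,
\[
y^4+4\beta^4=\bigl(y^2-2\beta y+2\beta^2\bigr)\bigl(y^2+2\beta y+2\beta^2\bigr),
\]
again factors $x^c-\alpha$ nontrivially. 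Hence in either case $x^c-\alpha$ is reducible over $K$.

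The substance is the reverse implication, which I would prove by strong induction on $c$. The base case is the prime-exponent statement: for a prime $q$, $x^q-\alpha$ is irreducible over $K$ iff $\alpha\notin K^q$. Indeed, if a monic factor $g$ of degree $d$ with $0<d<q$ existed, its roots would be of the form $\zeta^{j}\theta$ for a fixed root $\theta$ of $x^q-\alpha$ and powers of a primitive $q$-th root of unity $\zeta$, so that $u:=(-1)^d g(0)=\zeta^{\ell}\theta^{d}\in K$ and $u^{q}=\zeta^{\ell q}\theta^{dq}=\alpha^{d}$ (using $\zeta^{q}=1$). Because $\gcd(d,q)=1$, a B\'ezout relation $sd+tq=1$ gives $\alpha=(\alpha^{d})^{s}(\alpha^{t})^{q}=(u^{s}\alpha^{t})^{q}\in K^{q}$, contradicting the hypothesis. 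For the inductive step I would peel off one prime $q\mid c$: since $\alpha\notin K^q$, the base case makes $y^q-\alpha$ irreducible over $K$, so fixing a root $\beta$ with $\beta^{q}=\alpha$ and $[K(\beta):K]=q$ and applying Theorem~\ref{Thm:Capelli1} (with base field $K$ in place of $\Q$) to $f(y)=y^q-\alpha$ and $h(x)=x^{c/q}$ reduces the irreducibility of $x^{c}-\alpha=f(h(x))$ over $K$ to that of $x^{c/q}-\beta$ over $K(\beta)$, where the induction hypothesis is available.

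The crux, and the only place the exceptional condition $\alpha\notin -4K^4$ does any work, is propagating the hypotheses down this tower: to invoke the induction hypothesis I must verify that $\beta\notin K(\beta)^{r}$ for every prime $r\mid c/q$ and, when $4\mid c/q$, that $\beta\notin -4K(\beta)^{4}$, using only the corresponding facts for $\alpha$ over $K$. Taking norms $N_{K(\beta)/K}$ sends an $r$-th power in $K(\beta)$ to $\alpha$ up to the sign $(-1)^{q+1}$, so a putative relation in $K(\beta)$ can be pushed back to force $\alpha\in K^{r}$ or $\alpha\in-4K^{4}$ against the standing assumption; carrying this out requires separating the cases $r=q$, $r\neq q$, and the anomalous configuration $q=2$ with $4\mid c$, which is precisely where the $-4K^4$ term is generated. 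Organizing this descent correctly is the main obstacle, whereas the factorizations and the prime-exponent computation above are routine.
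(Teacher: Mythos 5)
Your proposal is incomplete at exactly the point where the theorem's content lies. (For context: the paper does not prove this statement at all --- it quotes it as Capelli's theorem from Schinzel's book \cite{S} --- so the benchmark is the classical Vahlen--Capelli proof, whose architecture your sketch correctly reproduces: explicit factorizations for the easy direction, the degree-$d$ factor/B\'ezout argument for the prime case, and induction via composition. Those parts are correct.) The gap is the inductive descent, which you explicitly defer (``Organizing this descent correctly is the main obstacle''). That descent is not routine bookkeeping: it is where the exceptional condition $\alpha\in-4K^4$ is actually generated, and your stated tool --- ``taking norms $N_{K(\beta)/K}$'' --- provably does not suffice in the critical case. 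Concretely, peel off $q=2$, so $\beta^2=\alpha$ and $N_{K(\beta)/K}(\beta)=-\alpha$. If $\beta=\gamma^2$ with $\gamma\in K(\beta)$ and $r=2\mid c/2$, the norm yields only $\alpha=-N(\gamma)^2$, which contradicts neither $\alpha\notin K^2$ nor $\alpha\notin-4K^4$; the argument stalls. What is needed there is the explicit computation in the quadratic extension: writing $\gamma=x+y\beta$ with $x,y\in K$, the relation $\gamma^2=\beta$ forces $2xy=1$ and $x^2+\alpha y^2=0$, whence $y=1/(2x)$ and $\alpha=-x^2/y^2=-4x^4\in-4K^4$, contradicting the hypothesis (which is available precisely because $r=2\mid c/2$ means $4\mid c$). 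A parallel explicit computation is needed when $8\mid c$ and $\beta\in-4K(\beta)^4$, since $\beta=-4\delta^4=-(2\delta^2)^2$ again reduces to the same stalled norm identity. Without these cases your induction hypothesis cannot be invoked, so the hard direction is unproven as written.

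A secondary soft spot: you apply Theorem~\ref{Thm:Capelli1} ``with base field $K$ in place of $\Q$.'' That generalization is true (and standard) in characteristic zero, but as stated in the paper the theorem is over $\Q$, so your proof should either cite or prove the relative version, since the whole tower argument rests on it. By contrast, your norm descent does go through cleanly for odd $q$ --- e.g., for odd $q$ and $\beta=-4\delta^4$ one gets $\alpha=(-4)^qN(\delta)^4=-4\bigl(2^{(q-1)/2}N(\delta)\bigr)^4\in-4K^4$ --- which is consistent with your own diagnosis that the anomaly is concentrated at $q=2$; the flaw is that you identified the crux correctly but did not carry it out, and the one mechanism you proposed for it (norms alone) is insufficient there.
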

The following theorem, known as \emph{Dedekind's Index Criterion}, or simply \emph{Dedekind's Criterion} if the context is clear, is a standard tool used in determining the monogenicity of a polynomial.
\begin{thm}[Dedekind \cite{Cohen}]\label{Thm:Dedekind}
Let $K=\Q(\theta)$ be a number field, $T(x)\in \Z[x]$ the monic minimal polynomial of $\theta$, and $\Z_K$ the ring of integers of $K$. Let $q$ be a prime number and let $\overline{ * }$ denote reduction of $*$ modulo $q$ (in $\Z$, $\Z[x]$ or $\Z[\theta]$). Let
\[\overline{T}(x)=\prod_{i=1}^k\overline{\tau_i}(x)^{e_i}\]
be the factorization of $T(x)$ modulo $q$ in $\F_q[x]$, and set
\[g(x)=\prod_{i=1}^k\tau_i(x),\]
where the $\tau_i(x)\in \Z[x]$ are arbitrary monic lifts of the $\overline{\tau_i}(x)$. Let $h(x)\in \Z[x]$ be a monic lift of $\overline{T}(x)/\overline{g}(x)$ and set
\[F(x)=\dfrac{g(x)h(x)-T(x)}{q}\in \Z[x].\]
Then
\[\left[\Z_K:\Z[\theta]\right]\not \equiv 0 \pmod{q} \Longleftrightarrow \gcd\left(\overline{F},\overline{g},\overline{h}\right)=1 \mbox{ in } \F_q[x].\]
\end{thm}

The next result is essentially a ``streamlined" version of Dedekind's index criterion for trinomials.
\begin{thm}{\rm \cite{JKS2}}\label{Thm:JKS}
Let $N\ge 2$ be an integer.
Let $K=\Q(\theta)$ be an algebraic number field with $\theta\in \Z_K$, the ring of integers of $K$, having minimal polynomial $f(x)=x^{N}+Ax^M+B$ over $\Q$, with $\gcd(M,N)=d_0$, $M=M_1d_0$ and $N=N_1d_0$. A prime factor $q$ of $\Delta(f)$ does not divide $\left[\Z_K:\Z[\theta]\right]$ if and only if all of the following statements are true: 
\begin{enumerate}[font=\normalfont]
  \item \label{JKS:I1} if $q\mid A$ and $q\mid B$, then $q^2\nmid B$;
  \item \label{JKS:I2} if $q\mid A$ and $q\nmid B$, then
  \[\mbox{either } \quad q\mid A_2 \mbox{ and } q\nmid B_1 \quad \mbox{ or } \quad q\nmid A_2\left((-B)^{M_1}A_2^{N_1}-\left(-B_1\right)^{N_1}\right),\]
  where $A_2=A/q$ and $B_1=\frac{B+(-B)^{q^j}}{q}$ with $q^j\mid\mid N$;
  \item \label{JKS:I3} if $q\nmid A$ and $q\mid B$, then
  \[\mbox{either } \quad q\mid A_1 \mbox{ and } q\nmid B_2 \quad \mbox{ or } \quad q\nmid A_1B_2^{M-1}\left((-A)^{M_1}A_1^{N_1-M_1}-\left(-B_2\right)^{N_1-M_1
  }\right),\]
  where $A_1=\frac{A+(-A)^{q^\ell}}{q}$ with $q^\ell\mid\mid N-M$, and $B_2=B/q$;
  \item \label{JKS:I4} if $q\nmid AB$ and $q\mid M$ with $N=s^{\prime}q^k$, $M=sq^k$, $q\nmid \gcd\left(s^{\prime},s\right)$, then the polynomials
   \begin{equation*}
     x^{s^{\prime}}+Ax^s+B \quad \mbox{and}\quad \dfrac{Ax^{sq^k}+B+\left(-Ax^s-B\right)^{q^k}}{q}
   \end{equation*}
   are coprime modulo $q$;
         \item \label{JKS:I5} if $q\nmid ABM$, then
     \[q^2\nmid \left(B^{N_1-M_1}N_1^{N_1}-(-1)^{M_1}A^{N_1}M_1^{M_1}(M_1-N_1)^{N_1-M_1} \right).\]
   \end{enumerate}
\end{thm}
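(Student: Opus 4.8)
The plan is to prove this as a direct, if lengthy, application of Dedekind's Index Criterion (Theorem \ref{Thm:Dedekind}) with $T(x)=f(x)=x^N+Ax^M+B$, organized by a case analysis on the residues of $A$, $B$ and $M$ modulo the prime $q$. Since $q\mid\Delta(f)$, the five hypotheses in (1)--(5) are exhaustive and mutually exclusive: they split first on whether $q$ divides $A$ and/or $B$, and then, when $q\nmid AB$, on whether $q\mid M$. In each case I would first write down the factorization $\overline{f}(x)=\prod_i\overline{\tau_i}(x)^{e_i}$ in $\F_q[x]$, then form the lifts $g(x)=\prod_i\tau_i(x)$ and $h(x)$ of $\overline{f}/\overline{g}$, compute $F(x)=(g(x)h(x)-f(x))/q$, and finally translate the criterion $\gcd(\overline{F},\overline{g},\overline{h})=1$ into the stated arithmetic condition. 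The relation $\Delta(f)=[\Z_K:\Z[\theta]]^2\Delta(K)$ together with Swan's formula (Theorem \ref{Thm:Swan}) is a useful guide throughout for seeing which quantities must be controlled at $q$.

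The two extreme cases set the pattern. When $q\mid A$ and $q\mid B$ (condition (1)), one has $\overline{f}(x)=x^N$, so $g(x)=x$, $h(x)=x^{N-1}$, and $F(x)=-(A/q)x^M-B/q$; since $\gcd(x,x^{N-1})=x$ for $N\ge 2$, the criterion reduces to $\overline{F}(0)\ne 0$, i.e. $q^2\nmid B$. At the opposite end, when $q\nmid ABM$ (condition (5)), one checks that $q\mid\Delta(f)$ forces $q\nmid N$: otherwise $\overline{f}'(x)=\overline{MA}\,x^{M-1}$ is coprime to $\overline{f}$ and $\overline{f}$ is separable, contradicting $q\mid\Delta(f)$. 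Hence the repeated factors of $\overline{f}$ arise only from the common roots of $\overline{f}$ and $\overline{f}'=x^{M-1}(Nx^{N-M}+MA)$; carrying these through Dedekind's criterion yields precisely the ``$q^2\nmid(\cdots)$'' condition in (5), which is the reduced form at $q$ of Swan's discriminant factor with $N,M,N-M$ replaced by $N_1,M_1,N_1-M_1$.

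The heart of the argument, and the main obstacle, is in conditions (2), (3) and (4), where $\overline{f}$ has irreducible factors of high multiplicity. The key tool is the iterated freshman's dream in characteristic $q$: if $q^j\mid\mid N$ and $q\nmid B$, then $x^N+\overline{B}=(x^{N/q^j}+\overline{B})^{q^j}$ in $\F_q[x]$ (using $\overline{B}^{q^j}=\overline{B}$), so that each distinct irreducible factor of the separable polynomial $x^{N/q^j}+\overline{B}$ appears in $\overline{f}$ with multiplicity exactly $q^j$. This pins down $g$ and $h$ up to the congruence needed, and the integrality of $B_1=(B+(-B)^{q^j})/q$ follows from Fermat's little theorem. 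An entirely analogous peeling-off, applied to the factor $x^{N-M}+\overline{A}$ of $\overline{f}(x)=x^M(x^{N-M}+\overline{A})$ with $q^\ell\mid\mid N-M$, handles condition (3) and produces $A_1=(A+(-A)^{q^\ell})/q$ and $B_2=B/q$; condition (4) combines both ideas, using $N=s'q^k$, $M=sq^k$ to extract the $q^k$-th power.

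I expect the delicate part to be the explicit computation of $F(x)=(g(x)h(x)-f(x))/q$ modulo $q$ in these high-multiplicity cases, and the subsequent evaluation of $\gcd(\overline{F},\overline{g},\overline{h})$: one must choose the monic lifts carefully and track all cancellations of order $q$ so that the division by $q$ is controlled, and then show that the surviving gcd is trivial if and only if the displayed congruence conditions hold. Verifying that the five conditions are not merely sufficient but also necessary (the ``only if'' direction) amounts to checking, in each case, that a failure of the condition makes $\gcd(\overline{F},\overline{g},\overline{h})$ nontrivial, which is where the bookkeeping is heaviest.
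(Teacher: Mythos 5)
First, a point about the comparison itself: the paper you are working from does not prove this theorem. It is quoted, with the citation in its heading, from Jakhar--Khanduja--Sangwan \cite{JKS2} and is used in the paper purely as a black box (alongside Theorem \ref{Thm:Dedekind}). So there is no internal proof to measure your attempt against; it has to be judged against what a complete proof would require.

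Your framework is the right one in spirit: apply Dedekind's criterion (Theorem \ref{Thm:Dedekind}) to $f$, split into cases according to whether $q$ divides $A$, $B$, $M$, and use the Frobenius identity $\left(x^{N/q^j}+\overline{B}\right)^{q^j}=x^{N}+\overline{B}$ in $\F_q[x]$ to factor $\overline{f}$ in the ramified cases; your treatment of case (1) is correct and complete, and your observation that $q\mid\Delta(f)$ together with $q\nmid ABM$ forces $q\nmid N$ is also correct. However, everything beyond that is a statement of intent rather than a proof, and the omissions sit exactly where the content of the theorem lies. In cases (2)--(5) you never actually compute $F(x)=(g(x)h(x)-f(x))/q$ or the gcd $\gcd\left(\overline{F},\overline{g},\overline{h}\right)$; you assert the computation ``yields precisely'' the displayed conditions. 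Extracting, say, the quantity $(-B)^{M_1}A_2^{N_1}-(-B_1)^{N_1}$ in (2) requires fixing the lifts $g,h$ concretely, tracking which binomial coefficients in the expansion are divisible by $q$ (and by $q^2$), and evaluating a resultant-type gcd; none of this is indicated. Moreover, your outline is phrased entirely in terms of $N$, $M$, $q^j$, $q^{\ell}$, and contains no mechanism producing the reduced exponents $M_1=M/d_0$, $N_1=N/d_0$ that appear in conditions (2), (3) and (5) -- the role of $d_0=\gcd(M,N)$ is never addressed. In case (5), identifying the repeated factors of $\overline{f}$ as divisors of $\overline{N}x^{N-M}+\overline{MA}$ is only the first step: one must determine their multiplicities, choose $g$ and $h$, and prove the equivalence with the stated ``$q^2\nmid(\cdots)$'' condition, which is a genuine discriminant/resultant computation that you gesture at via Swan's formula but do not perform; and the ``only if'' direction is acknowledged but not attempted. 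As it stands, this is a plausible plan with one easy case executed, not a proof; the remedy is either to carry out cases (2)--(5) in full or to do what the paper does and simply cite \cite{JKS2}.
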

\begin{rem}
We will find both Theorem \ref{Thm:Dedekind} and Theorem \ref{Thm:JKS} useful in our investigations.
\end{rem}

The next theorem follows from Corollary (2.10) in \cite{Neukirch}.
\begin{thm}\label{Thm:CD}
  Let $K$ and $L$ be number fields with $K\subset L$. Then \[\Delta(K)^{[L:K]} \bigm|\Delta(L).\]
\end{thm}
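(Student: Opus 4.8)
The plan is to deduce the claim from the transitivity of the discriminant in the tower $\Q\subseteq K\subseteq L$, which is precisely the content of the cited Corollary (2.10) in \cite{Neukirch}. Write $\mathfrak{d}_{E/F}$ for the relative discriminant ideal of an extension $E/F$ of number fields (an integral ideal of the ring of integers of $F$), and $N_{F/\Q}$ for the ideal norm down to $\Q$. Applied with ground field $\Q$, the transitivity formula reads
\[
\mathfrak{d}_{L/\Q}=\mathfrak{d}_{K/\Q}^{\,[L:K]}\cdot N_{K/\Q}\!\left(\mathfrak{d}_{L/K}\right)
\]
as ideals of $\Z$.

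First I would translate the two absolute discriminants into relative language: since $\Z$ is the ring of integers of $\Q$ and is a principal ideal domain, $\Z_K$ is free over $\Z$ and the relative discriminant ideal $\mathfrak{d}_{K/\Q}$ is exactly the principal ideal $(\Delta(K))$ generated by the absolute discriminant; likewise $\mathfrak{d}_{L/\Q}=(\Delta(L))$. Substituting these identifications into the transitivity formula yields
\[
(\Delta(L))=(\Delta(K))^{[L:K]}\cdot N_{K/\Q}\!\left(\mathfrak{d}_{L/K}\right)
\]
as ideals of $\Z$. Next I would observe that $\mathfrak{d}_{L/K}$ is an \emph{integral} ideal of $\Z_K$, so its norm $N_{K/\Q}(\mathfrak{d}_{L/K})$ is an integral ideal of $\Z$, hence of the form $(c)$ for some $c\in\Z$. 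The displayed equality then becomes $(\Delta(L))=\left(c\,\Delta(K)^{[L:K]}\right)$, which is exactly the assertion $\Delta(K)^{[L:K]}\mid \Delta(L)$.

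The divisibility is therefore essentially immediate once the transitivity formula is invoked, so there is no substantive obstacle beyond careful bookkeeping. The two points I would take care over are, first, the identification of the absolute discriminant of a number field with its relative discriminant ideal over $\Q$ (this is where the sign of $\Delta$ gets absorbed upon passing to the ideal it generates), and second, the fact that $N_{K/\Q}(\mathfrak{d}_{L/K})$ is generated by a rational integer rather than a fraction, which rests on $\mathfrak{d}_{L/K}$ being integral. If instead one wanted a self-contained argument, the real work would lie in establishing the transitivity formula itself, via the transitivity of the different, $\mathfrak{D}_{L/\Q}=\mathfrak{D}_{L/K}\cdot\left(\mathfrak{D}_{K/\Q}\Z_L\right)$, followed by taking norms down to $\Q$ and using $N_{L/\Q}=N_{K/\Q}\circ N_{L/K}$; that chain of relations, rather than the final divisibility, is the genuine mathematical content.
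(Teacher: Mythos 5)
Your proof is correct and follows exactly the route the paper intends: the paper simply cites Corollary (2.10) of \cite{Neukirch} (the transitivity formula $\mathfrak{d}_{L/\Q}=\mathfrak{d}_{K/\Q}^{[L:K]}N_{K/\Q}(\mathfrak{d}_{L/K})$), and your argument is precisely the bookkeeping that deduces the stated divisibility from it, including the key observation that $N_{K/\Q}(\mathfrak{d}_{L/K})$ is an integral ideal of $\Z$.
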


\begin{thm}\label{Thm:Pasten}
   Let $G(t)\in \Z[t]$, and suppose that $G(t)$ factors into a product of distinct irreducibles, such that the degree of each irreducible is at most 3.   Define 
   \[N_G\left(X\right)=\abs{\left\{p\le X : p \mbox{ is prime and } G(p) \mbox{ is squarefree}\right\}}.\]
   Then, 
   \[N_G(X)\sim C_G\dfrac{X}{\log(X)},\]
   where
   \[C_G=\prod_{\ell  \mbox{ \rm {\tiny prime}}}\left(1-\dfrac{\rho_G\left(\ell^2\right)}{\ell(\ell-1)}\right)\]
   and $\rho_G\left(\ell^2\right)$ is the number of $z\in \left(\Z/\ell^2\Z\right)^{*}$ such that $G(z)\equiv 0 \pmod{\ell^2}$.
 \end{thm}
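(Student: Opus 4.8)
The plan is to realize $N_G(X)$ as a sieve and detect squarefreeness with the M\"obius function. Writing $\mu^2(n)=\sum_{d^2\mid n}\mu(d)$ gives
\[
N_G(X)=\sum_{p\le X}\mu^2(G(p))=\sum_{d\ge 1}\mu(d)\,S_d(X),\qquad S_d(X):=\abs{\left\{p\le X : d^2\mid G(p)\right\}}.
\]
Only squarefree $d$ contribute. Since $G$ is a product of \emph{distinct} irreducibles, the Chinese Remainder Theorem makes the local root-count multiplicative on squarefree moduli, with the factor at a prime $\ell$ being exactly the quantity $\rho_G(\ell^2)$ of the statement. Because $p$ is prime, any prime $p\neq\ell$ with $\ell^2\mid G(p)$ satisfies $\ell\nmid p$, so the residue of $p$ modulo $\ell^2$ lies in $\left(\Z/\ell^2\Z\right)^{*}$; this is the reason $\rho_G(\ell^2)$ is counted over units.

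First I would extract the main term from the small moduli $d\le D_0$, taking $D_0$ a slowly growing parameter such as a fixed power of $\log X$. For such $d$, the Siegel--Walfisz form of the prime number theorem in arithmetic progressions yields, uniformly,
\[
S_d(X)=\frac{\rho_G(d^2)}{\phi(d^2)}\,\pi(X)+O\!\left(\rho_G(d^2)\,X\,e^{-c\sqrt{\log X}}\right).
\]
For squarefree $d$ one has $\frac{\rho_G(d^2)}{\phi(d^2)}=\prod_{\ell\mid d}\frac{\rho_G(\ell^2)}{\ell(\ell-1)}$, so summing against $\mu(d)$ and completing the series over all $d$ produces the Euler product
\[
\prod_{\ell}\left(1-\frac{\rho_G(\ell^2)}{\ell(\ell-1)}\right)=C_G.
\]
Convergence is clear: for all but finitely many $\ell$ (those dividing the relevant discriminant, leading coefficient, and constant-term data of $G$) the reduction of $G$ has only simple nonzero roots, so Hensel lifting gives $\rho_G(\ell^2)=\rho_G(\ell)\le\deg G$ and the $\ell$-th factor is $1+O(\ell^{-2})$. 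This recovers the predicted leading term $C_G\,X/\log X$.

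Next I would absorb the intermediate moduli $D_0<d\le X^{1/4-\varepsilon}$: here the modulus $d^2\le X^{1/2-2\varepsilon}$ lies within the level of distribution of the Bombieri--Vinogradov theorem, so the total discrepancy $\sum_d\abs{S_d(X)-\tfrac{\rho_G(d^2)}{\phi(d^2)}\pi(X)}$ is $o(X/\log X)$. The genuine difficulty is the tail $d>X^{1/4-\varepsilon}$, and I expect this, rather than the main term, to be the crux. In this range I would discard the primality of $p$ and bound $S_d(X)$ by the number of \emph{integers} $n\le X$ with $d^2\mid G(n)$; the tail sum then reduces to the classical squarefree-sieve estimate of Hooley for polynomials all of whose irreducible factors have degree at most $3$, applied factor by factor. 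Since $\abs{g(n)}\ll X^{\deg g}\le X^{3}$ for an irreducible factor $g$, the congruence $d^2\mid g(n)$ with $d$ large is controlled by counting integral points on the associated curves---elementary for quadratic factors and Hooley's argument for cubic ones. For factors of degree $4$ or higher this tail is not known to be $o(X/\log X)$ unconditionally (it would follow from the $abc$ conjecture), which is precisely why the hypothesis restricts each irreducible factor to degree at most $3$.
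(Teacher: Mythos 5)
First, a point of context: the paper does not prove this theorem at all. It is quoted from the literature, and the Remark immediately following the statement attributes it to work of Helfgott, Hooley and Pasten, with details deferred to \cite{JonesNYJM}. So the relevant comparison is between your sketch and those works.

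Your main-term analysis is correct and standard: the M\"obius decomposition, the multiplicativity of $\rho_G$ on squarefree moduli, the restriction to unit residues, Siegel--Walfisz, the (not actually necessary) appeal to Bombieri--Vinogradov in the intermediate range, and the convergence of the Euler product via Hensel lifting all go through. The genuine gap is at exactly the step you flag as the crux and then dispatch in one sentence. After you discard primality, you need, for each irreducible cubic factor $g$ of $G$,
\[
\abs{\left\{n\le X : d^2\mid g(n) \mbox{ for some } d> X^{1/4-\varepsilon}\right\}}=o\!\left(\frac{X}{\log X}\right),
\]
and this does \emph{not} follow from ``the classical squarefree-sieve estimate of Hooley.'' Hooley's theorem for cubic polynomials at integer arguments yields a tail bound that is $o(X)$ but falls short of $o(X/\log X)$: his saving in the critical range is a fractional power of $\log X$, not a full power, so the resulting error term swallows your main term $C_G X/\log X$ whole. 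This lost logarithm is precisely why the prime-argument cubic case remained open until Helfgott's 2014 Acta Math.\ paper \cite{Helfgott-cubic}; if one could simply forget primality and invoke Hooley, Helfgott's theorem would have been an immediate corollary of Hooley's decades-old result. Helfgott (and Pasten \cite{Pasten}, who treats the prime arguments and products needed here) must work much harder in the range $d>X^{1/2}$ --- counting integer points on the curves $my^2=g(n)$ with strong uniformity in $m$, combined with sieve-theoretic input --- exactly in order to push the tail below $X/\log X$. Your handling of linear and quadratic factors in the tail is fine (power-saving bounds are classical there), and the cross terms, where $\ell^2$ divides $G(n)$ through two distinct factors, are disposed of by a routine resultant argument; but for cubic factors your proposal replaces the hardest part of the theorem with a citation that is too weak to cover it.
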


\begin{rem}
  Theorem \ref{Thm:Pasten} follows from work of Helfgott, Hooley and Pasten \cite{Helfgott-cubic,Hooley-book,Pasten}. For more details, see \cite{JonesNYJM}.
\end{rem}

\begin{defn}\label{Def:Obstruction}
 In the context of Theorem \ref{Thm:Pasten}, for $G(t)\in \Z[t]$ and a prime $\ell$, if $G(z)\equiv 0 \pmod{\ell^2}$ for all $z\in \left(\Z/\ell^2\Z\right)^{*}$, we say that $G(t)$ has a \emph{local obstruction} at $\ell$. 
\end{defn}
The following immediate corollary of Theorem \ref{Thm:Pasten} is used to establish Corollary \ref{Cor:Main1}. 
\begin{cor}\label{Cor:Squarefree}
 Let $G(t)\in \Z[t]$, and suppose that $G(t)$ factors into a product of distinct irreducibles, such that the degree of each irreducible is at most 3. To avoid the situation when $C_G=0$, we suppose further that $G(t)$ has no local obstructions.
  Then there exist infinitely many primes $\rho$ such that $G(\rho)$ is squarefree.
\end{cor}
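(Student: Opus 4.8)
The plan is to deduce the corollary directly from the asymptotic supplied by Theorem \ref{Thm:Pasten}. Since the corollary's hypotheses on $G(t)$ (a product of distinct irreducibles, each of degree at most $3$) are exactly those of Theorem \ref{Thm:Pasten}, that theorem already gives $N_G(X)\sim C_G X/\log(X)$. Hence it suffices to show that the extra hypothesis --- that $G(t)$ has no local obstructions --- forces the constant $C_G$ to be strictly positive. Once $C_G>0$ is known, we have $N_G(X)\to\infty$ as $X\to\infty$, which immediately produces infinitely many primes $\rho$ with $G(\rho)$ squarefree.

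To establish $C_G>0$ I would analyze the product $C_G=\prod_{\ell}\left(1-\rho_G(\ell^2)/(\ell(\ell-1))\right)$ factor by factor. First recall that the number of units modulo $\ell^2$ is $\phi(\ell^2)=\ell(\ell-1)$, so $\rho_G(\ell^2)\le \ell(\ell-1)$ for every prime $\ell$, with equality holding precisely when $G(z)\equiv 0\pmod{\ell^2}$ for all $z\in(\Z/\ell^2\Z)^{*}$, that is, precisely when $G(t)$ has a local obstruction at $\ell$ in the sense of Definition \ref{Def:Obstruction}. The assumption that $G(t)$ has no local obstructions therefore yields the strict inequality $\rho_G(\ell^2)<\ell(\ell-1)$ for every $\ell$, so each factor of $C_G$ lies in the interval $(0,1]$ and in particular is nonzero.

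Positivity of the individual factors alone does not guarantee $C_G\neq 0$; I must also verify that the infinite product converges to a nonzero limit, which reduces to showing $\sum_{\ell}\rho_G(\ell^2)/(\ell(\ell-1))<\infty$. Here I would exploit that $G(t)$ is a product of \emph{distinct} irreducibles and hence separable, so for all but finitely many primes $\ell$ --- namely those not dividing the leading coefficient or the discriminant of $G$ --- the reduction $\overline{G}$ is separable over $\F_\ell$. For such $\ell$ every root of $\overline{G}$ is simple and lifts uniquely by Hensel's lemma, giving $\rho_G(\ell^2)\le \deg(G)$. Consequently the general term of the sum is $O(1/\ell^2)$, and $\sum_{\ell}1/\ell^2<\infty$ forces convergence. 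Combining the strict positivity of each factor with this convergence shows $C_G>0$.

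With $C_G>0$ established, Theorem \ref{Thm:Pasten} gives $N_G(X)\sim C_G X/\log(X)\to\infty$, so the set of primes $\rho$ for which $G(\rho)$ is squarefree is infinite, completing the argument. Since each step is either an elementary counting observation or a direct application of a cited result, I anticipate no serious obstacle; the only point requiring genuine care is confirming that the product converges to a nonzero value, and even there the uniform bound $\rho_G(\ell^2)\le \deg(G)$ for large $\ell$ renders the estimate routine.
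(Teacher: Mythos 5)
Your proposal is correct and takes the same route the paper intends: the paper offers no written proof at all, presenting the statement as an ``immediate'' consequence of Theorem \ref{Thm:Pasten}, namely that the no-obstruction hypothesis forces $C_G>0$ and the asymptotic $N_G(X)\sim C_G X/\log(X)$ then yields infinitely many suitable primes. The only detail the paper leaves implicit --- that $C_G>0$ requires not just $\rho_G(\ell^2)<\ell(\ell-1)$ for each $\ell$ but also convergence of the Euler product, which you settle via the separability of $G$ and Hensel's lemma giving $\rho_G(\ell^2)\le \deg(G)$ for all but finitely many $\ell$ --- is handled correctly in your write-up.
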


We make the following observation concerning $G(t)$ from Corollary \ref{Cor:Squarefree} in the special case when each of the distinct irreducible factors of $G(t)$ is of the form $a_it+b_i$ with $\gcd(a_i,b_i)=1$. In this situation, it follows that the minimum number of distinct factors required in $G(t)$ so that $G(t)$ has a local obstruction at the prime $\ell$ is $2(\ell-1)$. More precisely, in this minimum scenario, we have
\[G(t)=\prod_{i=1}^{2(\ell-1)}(a_it+b_i)\equiv C(t-1)^2(t-2)^2\cdots (t-(\ell-1))^2 \pmod{\ell},\] where $C\not \equiv 0 \pmod{\ell}$. Then  each zero $r$ of $G(t)$ modulo $\ell$ lifts to the $\ell$ distinct zeros
\[r,\quad r+\ell,\quad  r+2\ell,\quad \ldots \ldots, \quad r+(\ell-1)\ell \in \left(\Z/\ell^2\Z\right)^{*}\] of $G(t)$ modulo $\ell^2$ \cite[Theorem 4.11]{Dence}. That is, $G(t)$ has exactly $\ell(\ell-1)=\phi(\ell^2)$ distinct zeros $z\in \left(\Z/\ell^2\Z\right)^{*}$. Therefore, if the number of factors $k$ of $G(t)$ satisfies $k<2(\ell-1)$, then there must
exist $z\in \left(\Z/\ell^2\Z\right)^{*}$ for which $G(z)\not \equiv 0 \pmod{\ell^2}$, and we do not need to check such primes $\ell$ for a local obstruction. Consequently, only finitely many primes need to be checked for local obstructions. They are precisely the primes $\ell$ such that $\ell\le (k+2)/2$.

\section{The Proof of Theorem \ref{Thm:Main1}}\label{Section:MainProof}
Before we begin the proof of Theorem \ref{Thm:Main1}, we require the following. 

\begin{lemma}\label{Lem:1}
  Let $f(x)\in \Z[x]$ be monic and irreducible, and let $p$ be a prime.
  If $\abs{f(0)}\ge 2$ and $f(0)$ is squarefree, then $f(x^{p^n})$ is irreducible for all $n\ge 0$.
\end{lemma}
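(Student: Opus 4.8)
The plan is to combine the two theorems of Capelli with a norm computation. For $n=0$ the statement $f(x^{p^0})=f(x)$ is irreducible by hypothesis, so I fix $n\ge 1$ and set $c=p^n\ge 2$. Let $\alpha$ be a root of $f$ and $K=\Q(\alpha)$, so that $m:=\deg f=[K:\Q]$ and $\alpha\in\Z_K$ since $f$ is monic with integer coefficients. By Theorem~\ref{Thm:Capelli1} applied with $h(x)=x^c$, the composition $f(x^{c})$ is irreducible over $\Q$ precisely when $x^{c}-\alpha$ is irreducible over $K$. Because the only prime dividing $c=p^n$ is $p$, Theorem~\ref{Thm:Capelli2} reduces the question to excluding the two situations: (i) $\alpha=\beta^{p}$ for some $\beta\in K$; and (ii) $p=2$, $n\ge 2$, and $\alpha=-4\beta^{4}$ for some $\beta\in K$. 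Throughout I use that $\abs{N_{K/\Q}(\alpha)}=\abs{f(0)}$ (as $\alpha$ is a root of the monic minimal polynomial $f$), that the norm is multiplicative with $N_{K/\Q}(a)=a^{m}$ for $a\in\Q$, and that the norm of an element of $\Z_K$ is a rational integer. Note also $f(0)\ne 0$, so $\alpha\ne 0$.

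For case (i), since $\beta$ is a root of the monic polynomial $x^{p}-\alpha\in\Z_K[x]$, it is integral over $\Z_K$ and hence $\beta\in\Z_K$, so $N_{K/\Q}(\beta)\in\Z$. Taking norms in $\alpha=\beta^{p}$ gives $\abs{f(0)}=\abs{N_{K/\Q}(\beta)}^{p}$. As $\alpha\ne0$ and $\abs{f(0)}\ge 2$, the integer $\abs{N_{K/\Q}(\beta)}$ is at least $2$, whence $\abs{f(0)}$ is a perfect $p$-th power with $p\ge2$ and is therefore divisible by the square of a prime, contradicting that $f(0)$ is squarefree.

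For case (ii) I first pass to an integer, setting $\gamma=2\beta$, so that $\gamma^{4}=16\beta^{4}=-4\alpha$; thus $\gamma$ is a root of the monic $x^{4}+4\alpha\in\Z_K[x]$, giving $\gamma\in\Z_K$ and $N_{K/\Q}(\gamma)\in\Z$. Taking norms in $\gamma^{4}=-4\alpha$ yields $t^{4}=2^{2m}\abs{f(0)}$, where $t=\abs{N_{K/\Q}(\gamma)}\in\Z_{\ge1}$. Writing $t=2^{a}s$ with $s$ odd gives $\abs{f(0)}=2^{4a-2m}s^{4}$, and $2^{2m}\mid t^{4}$ forces $4a-2m\ge 0$. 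Squarefreeness of $f(0)$ now forces the odd fourth power $s^{4}$ to be squarefree, so $s=1$, and forces the even exponent $4a-2m$ to be at most $1$, hence $0$; thus $\abs{f(0)}=1$, contradicting $\abs{f(0)}\ge 2$. Therefore neither (i) nor (ii) holds, $x^{c}-\alpha$ is irreducible over $K$, and by Capelli $f(x^{p^{n}})$ is irreducible.

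I expect the delicate point to be case (ii): the element $\beta$ itself need not be an algebraic integer, so one must replace it by $\gamma=2\beta$ to land inside $\Z_K$ before the norm can be taken in $\Z$, and then the $2$-adic valuation bookkeeping must be run carefully to verify that the two hypotheses ``$f(0)$ squarefree'' and ``$\abs{f(0)}\ge 2$'' are together exactly strong enough to rule out $\abs{f(0)}=2^{4a-2m}s^{4}$. By contrast, case (i) is essentially immediate once one observes $\beta\in\Z_K$, and the whole argument hinges on the single clean principle that a squarefree integer of absolute value at least $2$ cannot be a nontrivial perfect power.
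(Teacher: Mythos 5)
Your proof is correct, and its core is the same as the paper's: use Capelli's two theorems to reduce irreducibility of $f(x^{p^n})$ to the non-existence of certain radical expressions for $\alpha$ in $\Q(\alpha)$, then rule these out by taking norms and playing squarefreeness of $f(0)$ against $\abs{f(0)}\ge 2$. Your case (i) is precisely the paper's argument (you additionally justify $N_{K/\Q}(\beta)\in\Z$ by noting $\beta$ is integral as a root of $x^p-\alpha$; the paper simply asserts this). The genuine difference is your case (ii): the paper's proof deduces from Theorems \ref{Thm:Capelli1} and \ref{Thm:Capelli2} only that $\alpha=\beta^p$, silently passing over the second branch of Theorem \ref{Thm:Capelli2}, namely $4\mid p^n$ and $\alpha=-4\beta^4$, which is a live possibility when $p=2$ and $n\ge 2$. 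Since the lemma is stated for every prime $p$ and is later invoked for arbitrary $p$ (e.g.\ in Lemma \ref{Lem:2}), that branch does need to be excluded, and your $\gamma=2\beta$ substitution followed by the $2$-adic valuation bookkeeping does so correctly. In other words, your write-up is not merely equivalent to the paper's proof; it closes a small gap in it. For what it is worth, case (ii) can be dispatched slightly faster in the same spirit as case (i): from $\alpha=-4\beta^4=-\left(2\beta^2\right)^2$, the element $\gamma=2\beta^2$ satisfies $\gamma^2=-\alpha$, hence lies in $\Z_K$, and taking norms gives $\abs{f(0)}=N_{K/\Q}(\gamma)^2$, a perfect square at least $2$, contradicting squarefreeness; but your valuation argument is equally valid.
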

\begin{proof}
   Suppose that $f(\alpha)=0$. Let $n\ge 1$ and assume, by way of contradiction, that $f(g(x))$ is reducible, where $g(x)=x^{p^n}$. We deduce from Theorems \ref{Thm:Capelli1} and \ref{Thm:Capelli2} that $\alpha=\beta^p$ for some $\beta \in \Q(\alpha)$. Then we conclude, by taking the norm $\NN:=\NN_{\Q(\alpha)/\Q}$, that
     \[\NN(\beta)^p=\NN(\alpha)=(-1)^{\deg{(f)}}f(0),\]
    which contradicts the fact that $f(0)$ is squarefree since $\NN(\beta)\in \Z$ and $\abs{f(0)}\ge 2$.
 \end{proof}

Note that the conditions in part \eqref{I3:m>=5} of Theorem \ref{Thm:Main1} contain the assumption that $\FF(x)$ is irreducible. Under this assumption, we obtain the following immediate corollary of Lemma \ref{Lem:1}.
 \begin{cor}\label{Cor:m>=5}
Let $m\in \Z$ with $m\ge 3$ and let $p$ be a prime with $p\ge 3$. Let $\FF(x)$ be as defined in \eqref{F}, and let $\D$ be as defined in \eqref{Eq:D}. Suppose that $A=4p^2u+1$ and $B=2pt+p$, where $u,t\in \Z$ are such that $B$ and $\D$ are
squarefree, with $\abs{B}\ge 2$. If $\FF(x)$ is irreducible, then $\FF(x^{p^n})$ is irreducible for all $n\ge 0$.
\end{cor}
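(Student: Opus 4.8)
The plan is simply to invoke Lemma \ref{Lem:1} with $f(x)=\FF(x)$. The hypotheses of the corollary are engineered to supply exactly the inputs that the lemma demands, so the argument is a verification rather than a construction. First I would observe that $\FF(x)=x^m+Ax^{m-1}+B$ is monic by inspection of \eqref{F}, and that it is irreducible by the standing assumption of the corollary; these are the first two requirements of Lemma \ref{Lem:1}.

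Next I would check the condition on the constant term. Evaluating at $0$ gives $\FF(0)=B$, so $\abs{\FF(0)}=\abs{B}\ge 2$ and $\FF(0)=B$ is squarefree, both immediately from the hypotheses on $B$. With all four conditions of Lemma \ref{Lem:1} now in place (monic, irreducible, $\abs{f(0)}\ge 2$, and $f(0)$ squarefree), the lemma yields that $\FF(x^{p^n})$ is irreducible for every $n\ge 0$, which is precisely the claim.

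I expect no genuine obstacle here: all of the content lives inside Lemma \ref{Lem:1}, whose proof rests on the two theorems of Capelli (Theorems \ref{Thm:Capelli1} and \ref{Thm:Capelli2}) together with a norm computation. It is worth remarking that the more restrictive arithmetic hypotheses carried over from the case $m\ge 5$ of Theorem \ref{Thm:Main1}---namely the congruence conditions $A=4p^2u+1$ and $B=2pt+p$ and the squarefreeness of $\D$---play no role in establishing \emph{irreducibility}; they are recorded only so that the corollary slots cleanly into the proof of that case of the theorem, where $\D$ and the precise residues of $A$ and $B$ are needed for the monogenicity half of the argument.
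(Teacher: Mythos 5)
Your proposal is correct and matches the paper exactly: the paper presents this result as an immediate corollary of Lemma \ref{Lem:1}, applied to the monic irreducible polynomial $\FF(x)$ whose constant term $\FF(0)=B$ is squarefree with $\abs{B}\ge 2$. Your closing remark is also accurate---the congruence conditions on $A$, $B$ and the squarefreeness of $\D$ are inert for irreducibility and are carried along only so the statement plugs into the monogenicity argument for part \eqref{I3:m>=5} of Theorem \ref{Thm:Main1}.
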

 Computer evidence suggests the following, which, if true, implies the conclusion of Corollary \ref{Cor:m>=5} by Lemma \ref{Lem:1}.
\begin{conj}\label{Conj:Irreducible}
Let $m\ge 3$, and let $\FF(x)$ be as defined in \eqref{F}. If $A\equiv 1 \pmod{4}$ and $B\equiv 1 \pmod{2}$, with $B$ squarefree and $\abs{B}\ge 2$, then $\FF(x)$ is irreducible.
\end{conj}

 The next lemma is similar in nature to Lemma 3.1 in \cite{JonesBAMS}. We let $T$ be the set of the 12 possible ordered pairs $(a,b)$, where $0\le a \le 3$ and $1\le b\le 3$, corresponding respectively to the possible congruence classes modulo 4 of $A$ and $B$ (excluding $b=0$ since $B$ is squarefree). For any integer $z$, we let $\widehat{z}\in \{0,1,2,3\}$ be the reduction of $z$ modulo 4.
 \begin{lemma}\label{Lem:2}
   Let $\FF(x)$ be as defined in \eqref{F} with $m\in \{2,3,4\}$, and let $p$ be a prime. Suppose that $B$ is squarefree with $\abs{B}\ge 2$. Then $\FF(x^{p^n})$ is irreducible for all $n\ge 0$ if  $(\widehat{A},\widehat{B})\in \Gamma_m$, where
     \[\Gamma_m=\left\{\begin{array}{cl}
      \{(0,1),(0,2),(1,1),(1,3), (2,2),(2,3),(3,1),(3,3)\} & \mbox{if $m\in \{2,4\}$}\\[.5em]
      \{(0,2),(1,1),(1,3),(2,2),(3,1),(3,3)\}& \mbox{if $m=3$.}
      \end{array}\right.\]
            Moreover, for each $m\in \{2,3,4\}$, and each $(a,b)\in T\setminus \Gamma_m$, there exist integers $A$ and $B$ with $(\widehat{A},\widehat{B})=(a,b)$ such that $\FF(x^{p^n})$ is reducible for all $n\ge 0$.
 \end{lemma}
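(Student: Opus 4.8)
The plan is to reduce both assertions to the irreducibility of the single trinomial $\FF(x)$ as a function of the class $(\widehat{A},\widehat{B})$. For the first (sufficiency) assertion, I will show that $(\widehat{A},\widehat{B})\in \Gamma_m$ forces $\FF(x)$ to be irreducible over $\Q$; since $B=\FF(0)$ is squarefree with $\abs{B}\ge 2$, Lemma \ref{Lem:1} then upgrades this to the irreducibility of $\FF(x^{p^n})$ for every $n\ge 0$. For the second (sharpness) assertion, for each excluded class $(a,b)\in T\setminus \Gamma_m$ I will exhibit explicit integers $A,B$ with $(\widehat{A},\widehat{B})=(a,b)$, $B$ squarefree and $\abs{B}\ge 2$, for which $\FF(x)$ is \emph{reducible}; because a factorization $\FF(x)=g(x)h(x)$ into nonconstant factors yields $\FF(x^{p^n})=g(x^{p^n})h(x^{p^n})$, the trinomial $\FF(x^{p^n})$ is then reducible for all $n\ge 0$ and every prime $p$.

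For $m=2$ I will use the quadratic discriminant: $\FF(x)=x^2+Ax+B$ is reducible exactly when $A^2-4B$ is a perfect square. The squares modulo $16$ are $\{0,1,4,9\}$, and a short computation shows that for each of the eight classes in $\Gamma_2$ the value $A^2-4B \bmod 16$ lands outside this set (for even $A$ one uses $A^2\equiv 0$ or $4\pmod{16}$, and for odd $A$ one uses $A^2\equiv 1$ or $9 \pmod{16}$). Hence $A^2-4B$ is never a square and $\FF(x)$ is irreducible. For the four excluded classes the same congruence computation permits $A^2-4B$ to be a square, and I will record one explicit reducible quadratic in each case.

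For $m\in\{3,4\}$ the governing observation is that, under our hypotheses, $\FF(x)$ is reducible if and only if it has a rational root, and that root must be $\pm 1$. Indeed, a rational root is an integer $r\mid B$ satisfying $B=-r^{m-1}(r+A)$; since $m-1\ge 2$ this gives $r^2\mid B$, so squarefreeness of $B$ forces $r\in\{-1,1\}$. For $m=3$ this already settles matters, as every reducible cubic has a rational root. For $m=4$ I must also rule out a factorization into two monic integer quadratics $\FF(x)=(x^2+ax+b)(x^2+cx+d)$. Comparing coefficients gives $a+c=A$, $b+d+ac=0$, $ad+bc=0$, and $bd=B$; since $B$ is squarefree, $\gcd(b,d)=1$, and $ad+bc=0$ then forces $a=bk$, $c=-dk$ for some $k\in\Z$, whence $b+d=Bk^2=bd\,k^2$. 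This last equation has no integer solutions with $bd$ squarefree and $\abs{bd}\ge 2$: for $k\ne 0$ it yields $\abs{bd}\le\abs{b+d}\le\abs{b}+\abs{d}$, which together with $\gcd(b,d)=1$ forces $\min(\abs{b},\abs{d})=1$ and then a direct check fails, while $k=0$ forces $\abs{B}=1$. Thus no such quartic factors into two quadratics, and the reducibility of $\FF(x)$ is again equivalent to having a root in $\{-1,1\}$.

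It then remains to translate ``$\FF(\pm 1)=0$'' into a congruence condition. The equations $\FF(1)=0$ and $\FF(-1)=0$ read $A+B=-1$ and $A+B=1$ for $m=3$, respectively $A+B=-1$ and $A-B=1$ for $m=4$; reducing modulo $4$ produces exactly the linear relations whose simultaneous failure defines $\Gamma_m$. Verifying class by class that every $(\widehat{A},\widehat{B})\in\Gamma_m$ violates both relations completes the irreducibility direction, while choosing $A,B$ to satisfy one of the exact relations (with $B$ squarefree and $\abs{B}\ge 2$) produces the required reducible examples in each excluded class. The main obstacle in the argument is the quartic case: establishing that a squarefree constant term with $\abs{B}\ge 2$ categorically prevents a two-quadratic splitting of $x^4+Ax^3+B$ is the one step that is not a routine congruence check, and it is precisely what makes the root criterion — and hence the clean description of $\Gamma_4$ — available.
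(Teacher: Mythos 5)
Your proposal is correct, and while it follows the same overall skeleton as the paper --- prove that $\FF(x)$ itself is irreducible whenever $(\widehat{A},\widehat{B})\in\Gamma_m$, invoke Lemma \ref{Lem:1} (using that $\FF(0)=B$ is squarefree with $\abs{B}\ge 2$) to lift irreducibility to $\FF(x^{p^n})$ for all $n\ge 0$, and then exhibit reducible examples in the excluded classes, whose reducibility trivially propagates to $\FF(x^{p^n})$ --- your execution of the key irreducibility step is genuinely different. The paper's argument is purely congruential: it checks that $\FF(x)$ has no zeros modulo $4$ (sufficient for $m\in\{2,3\}$), and for $m=4$ it shows that the coefficient system \eqref{Eq:n=0 m=4} coming from a quadratic--quadratic splitting is unsolvable modulo $4$; consequently the paper proves irreducibility for \emph{every} $A,B$ in the classes of $\Gamma_m$, with no appeal to the hypotheses on $B$. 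You instead use those hypotheses to get an exact characterization of reducibility: for $m=2$ via the discriminant $A^2-4B$ being a perfect square (correctly ruled out modulo $16$, where the squares are $\{0,1,4,9\}$), and for $m\in\{3,4\}$ via the rational root theorem plus squarefreeness of $B$, which pins any rational root to $\pm 1$, together with your integer argument ($\gcd(b,d)=1$, $a=bk$, $c=-dk$, $b+d=bdk^2$) eliminating a product of two quadratics when $m=4$; I verified that argument, including the endgame where $\min(\abs{b},\abs{d})=1$ forces $b(k^2\mp 1)=1$ and hence $\abs{B}=1$, and it is sound, as are the class-by-class reductions of $\FF(\pm 1)=0$ modulo $4$. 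What your route buys: a conceptual explanation of $\Gamma_m$ for $m\in\{3,4\}$ as precisely the classes incompatible with $A+B\equiv -1\tpmod 4$ and (for $m=4$) $A-B\equiv 1 \tpmod 4$, and a systematic sharpness direction --- any $A,B$ in an excluded class satisfying the relevant linear relation exactly gives a counterexample, and you can even arrange $B$ squarefree with $\abs{B}\ge 2$, which is a slightly stronger form of sharpness than the paper's tables (those include examples with $B=1$). What the paper's route buys: irreducibility in the $\Gamma_m$ classes without any arithmetic hypothesis on $B$, and a uniform mod-$4$ treatment of all three values of $m$. The one step of yours that genuinely exceeds a routine congruence check is the quartic splitting argument, so in a full write-up that step deserves the complete detail you sketched.
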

 \begin{proof}
   We begin by showing, for each $m\in \{2,3,4\}$ and each $(\widehat{A},\widehat{B})\in \Gamma_m$, that $\FF(x)$ is irreducible. It is easy to check that $\FF(x)$ has no zeros modulo 4, which verifies that $\FF(x)$ is irreducible when $m\in\{2,3\}$. When $m=4$, we also need to show that $\FF(x)$ cannot be written as the product of two irreducible quadratics. Suppose then that
   \[\FF(x)=x^4+Ax^3+B=(x^2+c_1x+c_0)(x^2+d_1x+d_0),\] for some $c_i,d_i\in \Z$. Expanding this factorization and equating coefficients yields the system of equations:
    \begin{align}\label{Eq:n=0 m=4}
  \begin{split}
  \mbox{constant term}: & \quad c_0d_0=B\\
   x:& \quad c_0d_1+c_1d_0=0\\
   x^2:& \quad c_0+d_0+c_1d_1=0\\
   x^3: & \quad c_1+d_1=A.
   \end{split}
 \end{align}
It turns out that for each $(\widehat{A},\widehat{B})\in \Gamma_4$, the system \eqref{Eq:n=0 m=4} is impossible modulo 4. For example, if $(\widehat{A},\widehat{B})=(3,1)$, then
\[\mbox{either } c_0\equiv d_0\equiv 1 \tpmod{4} \quad \mbox{or} \quad c_0\equiv d_0\equiv 3 \tpmod{4}.\] In either of these cases, we see in \eqref{Eq:n=0 m=4} that the congruence corresponding to the coefficient on $x$ contradicts the congruence corresponding to the coefficient on $x^3$. Similar contradictions are easily seen to occur for each $(\widehat{A},\widehat{B})\in \Gamma_4$, and we omit the details. It follows that $\FF(x^{p^n})$ is irreducible for all $n\ge 0$ by Lemma \ref{Lem:1}.

   To complete the proof, for each $m\in \{2,3,4\}$ we give in Tables \ref{Table:1}, \ref{Table:2} and \ref{Table:3}, respectively, an example of each  $(\widehat{A},\widehat{B})\in T\setminus \Gamma_m$ such that $\FF(x^{p^n})$ is reducible for all $n\ge 0$. We also give the factorization of $\FF(x^{p^n})$.\vspace*{.25in}

   \begin{table}[h]
 \begin{center}
\begin{tabular}{ccc}
 $(\widehat{A},\widehat{B})\in T\setminus \Gamma_2$ & $(A,B)$ & Factorization of $\FF(x^{p^n})$\\ \hline \\[-8pt]
$(0,3)$ & $(4,3)$ & $(x^{p^n}+1)(x^{p^n}+3)$\\ [2pt]
$(1,2)$ & $(5,6)$ & $(x^{p^n}+2)(x^{p^n}+3)$\\ [2pt]
$(2,1)$ & $(2,1)$  & $(x^{p^n}+1)^2$\\ [2pt]
$(3,2)$ & $(3,2)$ & $(x^{p^n}+1)(x^{p^n}+2)$
   \end{tabular}
\end{center}
\caption{Examples for $(\widehat{A},\widehat{B})\in T\setminus \Gamma_2$ and their factorizations}
 \label{Table:1} 
\end{table}\vspace*{.25in}

 \begin{table}[h]
 \begin{center}
\begin{tabular}{ccc}
 $(\widehat{A},\widehat{B})\in T\setminus \Gamma_3$ & $(A,B)$ & Factorization of $\FF(x^{p^n})$\\ \hline \\[-8pt]
$(0,1)$ & $(-4,5)$ & $(x^{p^n}+1)(x^{2p^n}-5x^{p^n}+5)$\\ [2pt]
$(0,3)$ & $(-4,3)$ & $(x^{p^n}-1)(x^{2p^n}-3x^{p^n}-3)$\\ [2pt]
$(1,2)$ & $(-3,2)$  & $(x^{p^n}-1)(x^{2p^n}-2x^{p^n}-2)$\\ [2pt]
$(2,1)$ & $(-2,1)$ & $(x^{p^n}-1)(x^{2p^n}-x^{p^n}-1)$\\ [2pt]
$(2,3)$ & $(-2,3)$ & $(x^{p^n}+1)(x^{2p^n}-3x^{p^n}+3)$\\ [2pt]
$(3,2)$ & $(-1,2)$ & $(x^{p^n}+1)(x^{2p^n}-2x^{p^n}+2)$
   \end{tabular}
\end{center}
\caption{Examples for $(\widehat{A},\widehat{B})\in T\setminus \Gamma_3$ and their factorizations}
 \label{Table:2} 
\end{table}\vspace*{.25in}

 \begin{table}[h]
 \begin{center}
\begin{tabular}{ccc}
 $(\widehat{A},\widehat{B})\in T\setminus \Gamma_4$ & $(A,B)$ & Factorization of $\FF(x^{p^n})$\\ \hline \\[-8pt]
$(0,3)$ & $(4,3)$ & $(x^{p^n}+1)(x^{3p^n}+3x^{2p^n}-3x^{p^n}+3)$\\ [2pt]
$(1,2)$ & $(-3,2)$ & $(x^{p^n}-1)(x^{3p^n}-2x^{2p^n}-2x^{p^n}-2)$\\ [2pt]
$(2,1)$ & $(2,1)$  & $(x^{p^n}+1)(x^{3p^n}+x^{2p^n}-x^{p^n}+1)$\\ [2pt]
$(3,2)$ & $(3,2)$ & $(x^{p^n}+1)(x^{3p^n}+2x^{2p^n}-2x^{p^n}+2)$
   \end{tabular}
\end{center}
\caption{Examples for $(\widehat{A},\widehat{B})\in T\setminus \Gamma_4$ and their factorizations}
 \label{Table:3}\qedhere
\end{table}
\end{proof}

Computer evidence suggests the following extension of Lemma \ref{Lem:2}.
\begin{conj}\label{Conj:Gamma}
Let $\FF(x)$ be as defined in \eqref{F}, and let $p$ be a prime. Suppose that $B$ is squarefree with $\abs{B}\ge 2$. Then $\FF(x^{p^n})$ is irreducible for all $n\ge 0$ if
  $(\widehat{A},\widehat{B})\in \Gamma_m$, where
    \[\Gamma_m=\left\{\begin{array}{cl}
      \{(0,1),(0,2),(1,1),(1,3),(2,2),(2,3),(3,1),(3,3)\} & \mbox{if $m\equiv 0\pmod{2}$}\\[.5em]
      \{(0,2),(1,1),(1,3),(2,2),(3,1),(3,3)\}& \mbox{if $m\equiv 1 \pmod{2}.$}
      \end{array}\right.\]
\end{conj}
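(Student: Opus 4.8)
The plan is to reduce the entire statement to the single assertion that $\FF(x)$ itself is irreducible over $\Q$. Since $B=\FF(0)$ is squarefree with $\abs{B}\ge 2$ by hypothesis, once we know $\FF(x)$ is irreducible, Lemma \ref{Lem:1} immediately yields the irreducibility of $\FF(x^{p^n})$ for every $n\ge 0$ and every prime $p$. Thus the task becomes: show that $(\widehat{A},\widehat{B})\in \Gamma_m$, together with $B$ squarefree and $\abs{B}\ge 2$, forces $\FF(x)$ to be irreducible. This mirrors the structure of the proof of Lemma \ref{Lem:2}, where irreducibility of $\FF(x)$ is the crux and Lemma \ref{Lem:1} does the rest.

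I would first dispose of the classes with $\widehat{B}=2$. Inspecting $\Gamma_m$ for both parities, the only pairs with $\widehat{B}=2$ are $(0,2)$ and $(2,2)$, and in each the first coordinate is even, so $2\mid A$. Since $B$ is squarefree, $\widehat{B}=2$ gives $2\mid B$ and $4\nmid B$. Hence $\FF(x)=x^m+Ax^{m-1}+B$ satisfies Eisenstein's criterion at the prime $2$ and is irreducible for \emph{every} $m$. This case is therefore settled uniformly, using nothing beyond the congruence data.

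The remaining classes are those with $B$ odd, and these carry the real content. A direct computation of $\FF(z)$ for $z\in\{0,1,2,3\}$ shows that the odd-$B$ classes collected in $\Gamma_m$ are exactly those for which $\FF(x)$ has no zero modulo $4$; equivalently, the excluded odd-$B$ classes are precisely those forcing $\FF(1)\equiv 0$ or $\FF(-1)\equiv 0\pmod 4$, and hence a rational integer root, as the reducible examples in Tables \ref{Table:2} and \ref{Table:3} illustrate. This rules out linear factors for all $m$ and completes the argument in low degree exactly as in Lemma \ref{Lem:2}. To treat a hypothetical factorization $\FF(x)=g(x)h(x)$ into monic integer polynomials of degrees $\ge 2$, I would reduce the resulting coefficient equations modulo $4$, as was done for the quartic case in the proof of Lemma \ref{Lem:2}, exploiting that $g(0)h(0)=B$ is squarefree, so $\gcd(g(0),h(0))=1$, and attempt to force the same type of mod-$4$ contradiction among the low-order coefficients.

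The main obstacle is precisely this last step for $m\ge 5$. Unlike the mod-$4$ argument that excludes linear factors, reduction modulo $2$ gives no leverage here: when $A$ and $B$ are both odd one has $\overline{\FF}(x)=x^m+x^{m-1}+1$ over $\F_2$, which is the reciprocal of $x^m+x+1$ and is genuinely reducible for infinitely many $m$ (for instance $x^5+x^4+1=(x^2+x+1)(x^3+x+1)$ over $\F_2$), while when $A$ is even and $B$ is odd one has $\overline{\FF}(x)=x^m+1$, which is highly composite over $\F_2$. Consequently one cannot certify irreducibility by a single modular reduction and must instead control all liftings of these $\F_2$-factorizations to $\Z$, subject to the mod-$4$ constraints and the squarefreeness of $B$. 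Since the number of admissible factorization types grows with $m$, producing a contradiction uniformly in $m$ is exactly what resists a clean argument, which is why the assertion is recorded here as a conjecture rather than a theorem.
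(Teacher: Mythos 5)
You should note first that this statement is recorded in the paper only as a conjecture, supported by computer evidence; the paper contains no proof of it, and your write-up concedes that it does not close the statement either, so it cannot be accepted as a proof. That said, the partial results you establish are correct and nontrivial. The reduction via Lemma \ref{Lem:1} is valid: since $\FF(0)=B$ is squarefree with $\abs{B}\ge 2$, irreducibility of $\FF(x)$ yields irreducibility of $\FF(x^{p^n})$ for all $n\ge 0$. Your Eisenstein observation settles the classes $(\widehat{A},\widehat{B})\in\{(0,2),(2,2)\}$ completely and uniformly in $m$: there $2\mid A$, $2\mid B$ and $4\nmid B$, so $\FF(x)$ (indeed $\FF(x^{p^n})$ itself) is Eisenstein at $2$. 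This actually goes beyond the paper's Lemma \ref{Lem:2}, which treats only $m\in\{2,3,4\}$. Likewise, your mod-$4$ computation correctly shows that for the odd-$B$ classes of $\Gamma_m$ the polynomial $\FF$ has no zero modulo $4$, hence no linear factor, for every $m$.

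The genuine gap is exactly where you locate it, and it is fatal to the proposal as a proof: for $m\ge 5$ nothing you write excludes a factorization of $\FF(x)$ into two monic integer factors each of degree at least $2$ (already for $m=5$, a quadratic times a cubic). The suggestion to imitate the quartic coefficient-matching modulo $4$ does not scale: the number of unknown coefficients and of admissible factorization shapes grows with $m$, and the resulting congruence systems are not visibly inconsistent --- all the more so because, as you observe, $\overline{\FF}$ is genuinely reducible over $\F_2$ (e.g.\ $x^5+x^4+1=(x^2+x+1)(x^3+x+1)$), so every such $\F_2$-factorization lifts to a candidate system modulo $4$ that would have to be ruled out by some additional input (squarefreeness of $B$, bounds on the roots, or a new idea). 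One further small inaccuracy: a zero of $\FF$ modulo $4$ does not ``force a rational integer root''; Tables \ref{Table:2} and \ref{Table:3} only exhibit, for each excluded class, particular pairs $(A,B)$ for which a linear factor exists. In short: your argument is sound as far as it goes and slightly extends Lemma \ref{Lem:2}, but the conjecture remains open at precisely the step you identify, so what you have is partial progress, not a proof.
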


\begin{rem}
  Note that the truth of Conjecture \ref{Conj:Gamma} also implies the conclusion of Corollary \ref{Cor:m>=5}.
\end{rem}

\begin{lemma}\label{Lem:3}
  Let $\FF(x)$ be as defined in \eqref{F}, and let $p$ be a prime. Suppose that $B$ is squarefree with $\abs{B}\ge 2$, and that $\D$, as defined in \eqref{Eq:D}, is squarefree. If $\FF(x)$ is irreducible, then $\FF(x)$ is monogenic.
  \end{lemma}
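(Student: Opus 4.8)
The plan is to apply the streamlined trinomial form of Dedekind's criterion, Theorem~\ref{Thm:JKS}, prime by prime. First I would record, via Corollary~\ref{Cor:Swan} with $n=0$, that
\[\Delta(\FF)=(-1)^{m(m-1)/2}B^{m-2}\D,\]
so that every prime $q$ dividing $\Delta(\FF)$ divides either $B$ or $\D$. The essential point to keep in mind is that $\Delta(\FF)$ is itself typically \emph{not} squarefree, because of the factor $B^{m-2}$; this is exactly why squarefreeness of $B$ and $\D$ alone is insufficient and a local analysis at each $q$ is required. Throughout I set $N=m$ and $M=m-1$, so that $d_0=\gcd(M,N)=1$, $N_1=m$ and $M_1=m-1$, and I verify $q\nmid[\Z_K:\Z[\theta]]$ for each prime $q\mid\Delta(\FF)$ by checking the appropriate item of Theorem~\ref{Thm:JKS}. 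The irreducibility hypothesis guarantees that $\FF$ is the minimal polynomial of the algebraic integer $\theta$, so that the criterion applies and monogenicity is the assertion $\Z_K=\Z[\theta]$.

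The primes dividing $B$ are the easy ones. If $q\mid B$, then $q\mid\mid B$ since $B$ is squarefree. When $q\mid A$ as well, item~\eqref{JKS:I1} applies and requires only $q^2\nmid B$, which is immediate. When $q\nmid A$, item~\eqref{JKS:I3} applies; here $N-M=1$, so the exponent $\ell$ with $q^\ell\mid\mid N-M$ is $\ell=0$, and hence $A_1=(A+(-A))/q=0$, so $q\mid A_1$. The first alternative of~\eqref{JKS:I3} then holds because $B_2=B/q$ is not divisible by $q$, again by squarefreeness of $B$.

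The primes $q$ with $q\nmid B$ (hence $q\mid\D$) carry the real content. I would first argue that $q\nmid A$: if $q\mid A$, then $\D\equiv m^mB\pmod q$ forces $q\mid m$, and then $q\mid m$ together with $q\mid A$ and $m\ge 2$ give $q^2\mid m^mB$ and $q^2\mid(m-1)^{m-1}A^m$, whence $q^2\mid\D$, contradicting that $\D$ is squarefree. Similarly $q\nmid m-1$, since $q\mid m-1$ would give $q\nmid m$ and therefore $\D\equiv m^mB\not\equiv 0\pmod q$. Hence $q\nmid ABM$ and item~\eqref{JKS:I5} applies. The key computation is that, with $N_1=m$ and $M_1=m-1$, the quantity tested in~\eqref{JKS:I5} is
\[B^{N_1-M_1}N_1^{N_1}-(-1)^{M_1}A^{N_1}M_1^{M_1}(M_1-N_1)^{N_1-M_1}=m^mB-(-1)^m(m-1)^{m-1}A^m=\D,\]
so the condition $q^2\nmid(\cdots)$ is precisely $q^2\nmid\D$, which holds. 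Combining the two cases, no prime divides $[\Z_K:\Z[\theta]]$, so $\FF$ is monogenic.

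The main obstacle is organizational rather than computational: one must correctly sort the primes dividing the non-squarefree discriminant into the governing items of Theorem~\ref{Thm:JKS}, and in particular rule out the exceptional configuration $q\mid A$, $q\nmid B$, $q\mid\D$ using squarefreeness of $\D$. The feature that makes everything collapse is the $N-M=1$ structure of our trinomial, which renders items~\eqref{JKS:I1} and~\eqref{JKS:I3} automatic from squarefreeness of $B$ and turns the test in item~\eqref{JKS:I5} into the bare statement that $\D$ is squarefree.
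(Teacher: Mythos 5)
Your proposal is correct and follows essentially the same route as the paper: compute $\Delta(\FF)=(-1)^{m(m-1)/2}B^{m-2}\D$ via Corollary \ref{Cor:Swan}, then run Theorem \ref{Thm:JKS} with $N=m$, $M=m-1$ prime by prime, using squarefreeness of $B$ to dispose of items \eqref{JKS:I1} and \eqref{JKS:I3} (where $\ell=0$ forces $A_1=0$) and squarefreeness of $\D$ to dispose of item \eqref{JKS:I5} and to rule out the configurations $q\mid A$, $q\nmid B$ and $q\nmid AB$, $q\mid m-1$ for $q\mid\D$. The only differences are organizational (you split primes by $q\mid B$ versus $q\nmid B$, the paper by $q\mid B$ versus $q\mid\D$) and that you make explicit the computation showing the quantity in item \eqref{JKS:I5} equals $\D$, which the paper leaves implicit.
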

\begin{proof}
We have by Corollary \ref{Cor:Swan} that
      \begin{align*}
      \Delta(\FF(x))&=(-1)^{m(m-1)/2}B^{m-2}\left(m^mB-(-1)^m(m-1)^{m-1}A^m\right)\\
      &=(-1)^{m(m-1)/2}B^{m-2}\D.
      \end{align*} Let $K=\Q(\theta)$, where $\FF(\theta)=\theta^m+A\theta^{m-1}+B=0$,
 and let $\Z_K$ denote the ring of integers of $K$. We check that all statements in Theorem \ref{Thm:JKS} are true with $N:=m$ and $M:=m-1$, and all primes $q$ dividing $\Delta(\FF(x))$. Note that statements \eqref{JKS:I1} and \eqref{JKS:I5} are always true for any value of $q$ since, respectively, $B$ and $\D$ are squarefree.

  Suppose first that $q\mid B$.  We see immediately that statements \eqref{JKS:I2} and \eqref{JKS:I4} are trivially true. For statement \eqref{JKS:I3}, we have $\ell=0$ so that $A_1=0$. Hence, $q\mid A_1$, and $q\nmid B_2$ since $B$ is squarefree. Thus, statement \eqref{JKS:I3} is true and  $\left[\Z_K:\Z[\theta]\right]\not \equiv 0 \pmod{q}.$

  Suppose now that $q$ is a prime such that $q\mid \D$. Observe that if $q\mid A$ and $q\nmid B$, then $q\mid m$, which contradicts the fact that $\D$ is squarefree. Hence, statement \eqref{JKS:I2} is trivially true. If $q\nmid A$ and $q\mid B$, then $\ell=0$ so that $A_1=0$. Hence, $q\mid A_1$, and $q\nmid B_2$ since $B$ is squarefree. Therefore, statement \eqref{JKS:I3} is true. Suppose next that $q\nmid AB$ and $q\mid (m-1)$. Then $q\mid m$ since $q\mid \D$, which is impossible, and so statement \eqref{JKS:I4} is trivially true.  Thus, $\left[\Z_K:\Z[\theta]\right]\not \equiv 0 \pmod{q}$, and consequently, $\FF(x)$ is monogenic, which completes the proof of the lemma.
\end{proof}

\begin{proof}[Proof of Theorem \ref{Thm:Main1}]
For part \eqref{I1:m=2}, we see that 
\[(\widehat{A},\widehat{B})\in \{(3,1),(3,3)\}\subset \Gamma_2.\]
Thus, $\FF(x^{p^n})$ is irreducible for all $n\ge 0$ by Lemma \ref{Lem:2}, and $\FF(x)$ is monogenic by Lemma \ref{Lem:3}.
By Corollary \ref{Cor:Swan}, we have that
\begin{equation}\label{Eq:Discp^n m=2}
\Delta(\FF(x^{p^n}))=(-1)^{p^n(2p^n-1)}B^{p^n-1}p^{2np^n}\D^{p^n},
\end{equation}
where $\D=4B-A^2$.
 Define
\[\theta_n:=\theta^{1/p^n} \quad \mbox{and} \quad K_n:=\Q(\theta_n) \quad \mbox{for $n\ge 0$},\] noting that $\theta_0=\theta$ and $K_0=K$ from the proof of Lemma \ref{Lem:3}. Furthermore, observe that $\FF((\theta_n)^{p^n})=0$ and
$[K_{n+1}:K_n]=p$ for all $n\ge 0$. We assume that $\FF(x^{p^n})$ is monogenic and proceed by induction on $n$. Then $\Delta\left(\FF(x^{p^n})\right)=\Delta(K_n)$, and we deduce from Theorem \ref{Thm:CD} that
\[\Delta\left(\FF(x^{p^n})\right)^p\bigm| \Delta(K_{n+1}).\]
By \eqref{Eq:Discp^n m=2}, we have that
\[\Delta(\FF(x^{p^{n+1}}))/\Delta(\FF(x^{p^n}))^p=B^{p-1}p^{2p^{n+1}}.\] Hence, to show that $\FF(x^{p^{n+1}})$ is monogenic, we only have to show that
\begin{equation}\label{Eq:modq}
\left[\Z_{K_{n+1}}:\Z[\theta_{n+1}]\right]\not \equiv 0 \pmod{q}
\end{equation}
for all primes $q$ dividing $Bp$. We check that all statements in Theorem \ref{Thm:JKS} are true for such primes, when applied to $\FF(x^{p^{n+1}})=x^{2p^{n+1}}+Ax^{p^{n+1}}+B$.

Suppose first that $q=p$. We use Theorem \ref{Thm:Dedekind} with $T(x):=\FF(x^{p^{n+1}})$. Since $A\equiv 2\pmod{p}$ and $B\equiv 1 \pmod{p}$, we get that
$\overline{T}(x)=(x+1)^{2p^{n+1}}$. Thus, we can let $g(x)=x+1$ and $h(x)=(x+1)^{2p^{n+1}-1}$, so that
 \begin{align}\label{Eq:F(x)}
   F(x)&=\dfrac{g(x)h(x)-T(x)}{p}\nonumber \\
   &=\dfrac{(x+1)^{2p^{n+1}}-x^{2p^{n+1}}-Ax^{p^{n+1}}-B}{p}\nonumber \\
   &=\sum_{\stackrel{j=1}{j\ne p^{n+1}}}^{2p^{n+1}-1}\dfrac{\binom{2p^{n+1}}{j}}{p}x^j+\dfrac{\binom{2p^{n+1}}{p^{n+1}}-A}{p}x^{p^{n+1}}+\dfrac{1-B}{p}.
 \end{align} To show that $\gcd(\overline{g},\overline{F})=1$, it is enough to show that $F(-1) \not \equiv 0 \pmod{p}$. Since
 \[\sum_{\stackrel{j=1}{j\ne p^{n+1}}}^{2p^{n+1}-1}\binom{2p^{n+1}}{j}x^j=(x+1)^{2p^{n+1}}-x^{2p^{n+1}}-1-\binom{2p^{n+1}}{p^{n+1}}x^{p^{n+1}},\]
 it follows that
 \begin{equation*}\label{Eq:Binom}
 \sum_{\stackrel{j=1}{j\ne p^{n+1}}}^{2p^{n+1}-1}\binom{2p^{n+1}}{j}(-1)^j=-2+\binom{2p^{n+1}}{p^{n+1}}\equiv 0\pmod{p^2} \quad \mbox{ \cite{Bailey,Grinberg}.}
 \end{equation*}
Hence, the sum in \eqref{Eq:F(x)} is zero modulo $p$ when $x=-1$. Also, since $A\equiv 2\pmod{p}$ and $B-1$ is squarefree, we see from \eqref{Eq:F(x)} that
 \[F(-1)\equiv \overline{\dfrac{1-B}{p}}\not \equiv 0 \pmod{p}.\] Thus, we conclude from Theorem \ref{Thm:Dedekind} that \eqref{Eq:modq} is true for $q=p$.

  Suppose next that $q\mid B$. Note that $q\ne p$. Here we use Theorem \ref{Thm:JKS}, and we want to show that all statements are true for $\FF(x^{p^{n+1}})$ with $N=2p^{n+1}$ and $M=p^{n+1}$. Statement \eqref{JKS:I1} is true since $B$ is squarefree. We see that statements \eqref{JKS:I2}, \eqref{JKS:I4} and \eqref{JKS:I5} are trivially true. For statement \eqref{JKS:I3}, we have that $\ell=0$, since $q\ne p$ and $N-M=p^{n+1}$, and hence, $A_1=0$. Thus, $q\mid A_1$ and $q\mid B_2$ since $B$ is squarefree. Therefore, \eqref{Eq:modq} is true. Consequently, $\FF(x^{p^n})$ is monogenic for all $n\ge 0$ by induction.

  For parts \eqref{I2:m=3 and m=4} and \eqref{I3:m>=5}, we give details only for the case $m=3$ since the other cases are handled in an identical manner. In particular, for part \eqref{I3:m>=5} (cases of $m\ge 5$), we use Corollary \ref{Cor:m>=5} and proceed as in the case of $m=3$.

  For the case of $m=3$, we see that 
\[(\widehat{A},\widehat{B})\in \{(1,1),(1,3)\}\subset \Gamma_3.\]
Thus, $\FF(x^{p^n})$ is irreducible for all $n\ge 0$ by Lemma \ref{Lem:2}, and $\FF(x)$ is monogenic by Lemma \ref{Lem:3}.
By Corollary \ref{Cor:Swan}, we have that
\begin{equation}\label{Eq:Discp^n m=3}
\Delta(\FF(x^{p^n}))=(-1)^{3p^n(3p^n-1)/2}B^{2p^n-1}p^{3np^n}\D^{p^n},
\end{equation}
where $\D=27B+4A^3$. Using \eqref{Eq:Discp^n m=3} and Theorem \ref{Thm:CD} for
\[\FF(x^{p^{n+1}})=x^{3p^{n+1}}+Ax^{2p^{n+1}}+B,\] and arguing as in part \eqref{I1:m=2} with $\theta_n$ and $K_n$ defined accordingly, we deduce that we only have to check primes dividing $B$, since $B\equiv 0 \pmod{p}$ in this case.

Suppose first that $q=p$. We use Theorem \ref{Thm:Dedekind} with $T(x):=\FF(x^{p^{n+1}})$. Since $A\equiv 1\pmod{p}$ and $B\equiv 0 \pmod{p}$, we get that
\[\overline{T}(x)=x^{2p^{n+1}}(x+1)^{p^{n+1}}.\] Thus, we may let $g(x)=x(x+1)$ and $h(x)=x^{2p^{n+1}-1}(x+1)^{p^{n+1}-1}$, so that
 \begin{align}\label{Eq:F(x)m=3}
   F(x)&=\dfrac{g(x)h(x)-T(x)}{p}\nonumber \\
   &=\dfrac{x^{2p^{n+1}}(x+1)^{p^{n+1}}-x^{3p^{n+1}}-Ax^{2p^{n+1}}-B}{p}\nonumber \\
   &=\sum_{j=1}^{p^{n+1}-1}\dfrac{\binom{p^{n+1}}{j}}{p}x^{j+2p^{n+1}}+\left(\dfrac{1-A}{p}\right)x^{2p^{n+1}}-\dfrac{B}{p}.
 \end{align} To show that $\gcd(\overline{g},\overline{F})=1$, it is enough to show that $F(c) \not \equiv 0 \pmod{p}$, where $c\in \{-1,0\}$. Note first that since $A\equiv 1 \pmod{p^2}$, the middle term in \eqref{Eq:F(x)m=3} disappears in $\overline{F}$. Observe next that the sum in \eqref{Eq:F(x)m=3} is zero when $x=-1$. Hence,
 \[F(c)\equiv-\overline{\dfrac{B}{p}}\not \equiv 0 \pmod{p},\] since $B$ is squarefree.  Thus, we deduce from Theorem \ref{Thm:Dedekind} that \eqref{Eq:modq} is true for $q=p$.

  Suppose next that $q\mid B$ with $q\ne p$. Here we use Theorem \ref{Thm:JKS}, and we want to show that all statements are true for $\FF(x^{p^{n+1}})$ with $N=3p^{n+1}$ and $M=2p^{n+1}$. Statement \eqref{JKS:I1} is true since $B$ is squarefree. We see that statements \eqref{JKS:I2}, \eqref{JKS:I4} and \eqref{JKS:I5} are trivially true. For statement \eqref{JKS:I3}, we have that $\ell=0$, since $q\ne p$ and $N-M=p^{n+1}$, and hence, $A_1=0$. Thus, $q\mid A_1$ and $q\mid B_2$ since $B$ is squarefree. Therefore, \eqref{Eq:modq} is true. Consequently, $\FF(x^{p^n})$ is monogenic for all $n\ge 0$ by induction.
 \end{proof}

\section{Proof of Corollary \ref{Cor:Main1}}
\begin{proof}
For all cases of $m$ in Theorem \ref{Thm:Main1}, we define $G(t):=B\D$. We wish to apply Corollary \ref{Cor:Squarefree}, and so we must check for local obstructions. In light of the discussion following Corollary \ref{Cor:Squarefree}, since $B$ and $\D$ are both linear in $t$, we only have to check for a local obstruction at the prime $\ell=2$.

Specifically, for part \eqref{I1:m=2} of Theorem \ref{Thm:Main1} (when $m=2$), we get that
  \[G(t)=p(2pt+1)(8t-16pu^2-8p^2u-16u-4p-p^3).\]
   Since
  $G(1)\equiv 1\pmod{4}$, there is no local obstruction at $\ell=2$. Thus, by Corollary \ref{Cor:Squarefree}, there exist infinitely many primes $\rho$ such that $G(\rho)$ is squarefree. Therefore, we see that $\abs{B}\ge 2$, and both $B$ and $\D$, individually, are squarefree for such primes $\rho$. Note also that $B-1=2pt$ is squarefree when $t=\rho>p$. Consequently, for each $t=\rho>p$, $\FF(x^{p^n})$ is monogenic for all integers $n\ge 0$ by Theorem \ref{Thm:Main1}.

When $m\ge 3$, we get generically that
\[G(t)=p(2t+1)(m^m(2pt+p)-(-1)^m(m-1)^{m-1}(4p^2u+1)^m),\] from which it follows easily for $m\ge 3$ that
\[G(1)\equiv \left\{\begin{array}{cl}
 3p \pmod{4}& \mbox{if $m\equiv 0 \pmod{4}$}\\
 1 \pmod{4} & \mbox{if $m\equiv 1 \pmod{4}$}\\
 p \pmod{4}& \mbox{if $m\equiv 2 \pmod{4}$}\\
 3 \pmod{4}& \mbox{if $m\equiv 3 \pmod{4}$.}
 \end{array}\right.\]
 Therefore, in each of these cases, we conclude from Corollary \ref{Cor:Squarefree} that there exist infinitely many primes $\rho$ such that $G(\rho)$ is squarefree. Consequently, for each $t=\rho$, $\FF(x^{p^n})$ is monogenic for all integers $n\ge 0$ by Theorem \ref{Thm:Main1}.
\end{proof}

\begin{thebibliography}{99}



\bibitem{Bailey}  D. F. Bailey, Some binomial coefficient congruences, \emph{Appl. Math. Lett.} {\bf 4} (1991), no. 4, 1--5.

\bibitem{BMT} D. W. Boyd, G. Martin and M. Thom, \emph{Squarefree values of trinomial discriminants}, LMS J. Comput. Math. {\bf 18} (2015), no. 1, 148--169.








\bibitem{Cohen} H. Cohen, \emph{A Course in Computational Algebraic Number Theory}, {Springer-Verlag}, 2000.







\bibitem{DS}  C. T. Davis and B. K. Spearman, \emph{The index of a quartic field defined by a trinomial $x^4+ax+b$}, J. Algebra Appl. {\bf 17} (2018), no. 10, 1850197, 18 pp.

\bibitem{Dence}  J. Dence and T. Dence, \emph{Elements of the theory of numbers}, Harcourt/Academic Press, San Diego, CA, 1999.







\bibitem{Grinberg}  D. Grinberg, \emph{The Lucas and Babbage congruences}, \url{http://www.cip.ifi.lmu.de/~grinberg/lucascong.pdf}




\bibitem{Josh} J. Harrington, {On the factorization of the trinomials $x^n+cx^{n-1}+d$}, \emph{Int. J. Number Theory} {\bf 8} (2012), no. 6, 1513--1518.





\bibitem{Helfgott-cubic}  H. A. Helfgott, \emph{Square-free values of $f(p)$, $f$ cubic}, Acta Math. {\bf 213} (2014), no. 1, 107--135.

\bibitem{Hooley-book} C. Hooley, \emph{Applications of Sieve Methods to the Theory of Numbers}, Cambridge Tracts in Mathematics, No. 70. Cambridge University Press, Cambridge-New York-Melbourne, (1976). xiv+122 pp.

\bibitem{JKS2} A. Jakhar, S. Khanduja and N. Sangwan, \emph{Characterization of primes dividing the index of a trinomial}, Int. J. Number Theory {\bf 13} (2017), no. 10, 2505--2514.


\bibitem{JonesIJM} L. Jones and D. White, \emph{Monogenic trinomials with non-squarefree discriminant}, Internat. J. Math. {\bf 32} (2021), no. 13, Paper No. 2150089, 21 pp.

\bibitem{JonesACM}  L. Jones, \emph{Infinite families of non-monogenic trinomials}, Acta Sci. Math. (Szeged) {\bf 87} (2021), no. 1--2, 95--105.

\bibitem{JonesJAA} L. Jones, \emph{Monogenic reciprocal trinomials and their Galois groups}, J. Algebra Appl. {\bf 21} (2022), no. 2, Paper No. 2250026, 11 pp.

\bibitem{JonesNYJM} L. Jones, Infinite families of reciprocal monogenic polynomials and their Galois groups, \emph{New York J. Math.} {\bf 27} (2021), 1465--1493.
\bibitem{JonesBAMS} L. Jones, \emph{Reciprocal monogenic quintinomials of degree $2^n$}, {Bull. Aust. Math. Soc.} (to appear).



\bibitem{JP} L. Jones and T. Phillips, \emph{Infinite families of monogenic trinomials and their Galois groups}, Internat. J. Math. {\bf 29} (2018), no. 5, 1850039, 11 pp.





\bibitem{KR} B. Koley and A. S. Reddy, \emph{Survey on irreducibility of trinomials}, arXiv:2012.07568.










\bibitem{Neukirch} J. Neukirch, \emph{Algebraic Number Theory}, Springer-Verlag, Berlin, 1999.



\bibitem{Pasten} H. Pasten, \emph{The ABC conjecture, arithmetic progressions of primes and squarefree values of polynomials at prime arguments}, {Int. J. Number Theory} {\bf 11} (2015), no. 3, 721--737.


\bibitem{PR} J. Patsolic and J. Rouse, \emph{Trinomials defining quintic number fields}, Int. J. Number Theory {\bf 13} (2017), no. 7, 1881--1894.


\bibitem{SSS} W. Sawin, M. Shusterman and M. Stoll, Irreducibility of polynomials with a large gap, \emph{Acta Arith.} {\bf 192} (2020), no. 2, 111--139.

 \bibitem{S} A.~Schinzel, \emph{Polynomials with Special Regard to Reducibility}, Encyclopedia of Mathematics and its Applications, {\bf 77}, Cambridge University Press, Cambridge, 2000.

\bibitem{Shparlinski} I. E. Shparlinski, \emph{On the distribution of irreducible trinomials}, Canad. Math. Bull. {\bf 54} (2011), no. 4, 748--756.

\bibitem{Smith} H. Smith, Two families of monogenic $S_4$ quartic number fields, \emph{Acta Arith.} {\bf 186} (2018), no. 3, 257--271.




\bibitem{Swan}  R. Swan, \emph{Factorization of polynomials over finite fields}, Pacific J. Math. {\bf 12} (1962), 1099--1106.

\bibitem{Washington}  L. C. Washington, \emph{Introduction to cyclotomic fields, Second edition}, {Graduate Texts in Mathematics}, {\bf 83}, Springer-Verlag, New York, 1997.


\end{thebibliography}
\end{document}